\newtheorem{proposition}{Proposition}
\title{Transient Instability and Patterns of Reactivity in Diffusive-Chemotaxis  Soil Carbon Dynamics}
\author{Fasma Diele \\
  Istituto per le Applicazioni del Calcolo \lq \lq M. Picone\rq \rq\\
  National Research Council (CNR)\\
  via G. Amendola 122/D, Bari, Italy\\
  \texttt{fasma.diele@cnr.it}
  \And
  Andrew L. Krause \\
  Mathematical Sciences Department \\
  Durham University\\
  Upper Mountjoy Campus, Stockton Rd, Durham, United Kingdom\\
  \texttt{andrew.krause@durham.ac.uk}
  \And
  Deborah Lacitignola \\
  Dipartimento di Ingegneria Elettrica e dell'Informazione\\
  Universit\'a di Cassino e del Lazio Meridionale\\
  via Di Biasio, Cassino, Italy\\
  \texttt{d.lacitignola@unicas.it}
   \And
 Carmela Marangi \\
  Istituto per le Applicazioni del Calcolo \lq \lq M. Picone\rq \rq\\
  National Research Council (CNR)\\
  via G. Amendola 122/D, Bari, Italy\\
  \texttt{carmela.marangi@cnr.it} \\
  \And
 Angela Monti \\
  Istituto per le Applicazioni del Calcolo \lq \lq M. Picone\rq \rq \\
  National Research Council (CNR)\\
  via G. Amendola 122/D, Bari, Italy\\
  \texttt{angela.monti@cnr.it} 
  \And
  Edgardo Villar-Sep\'ulveda \\
  School of Engineering Mathematics and Technology \\
  University of Bristol \\
  Ada Lovelace Building, Tankard’s Cl, University Walk, Bristol, United Kingdom\\
  \texttt{edgardo.villar-sepulveda@bristol.ac.uk}
}
\newcommand{\jac}{J_{k}}
\begin{document}

\maketitle

\begin{abstract}
We study pattern formation in a chemotaxis model of bacteria and soil carbon dynamics as an example system where transient dynamics can give rise to pattern formation outside of Turing unstable regimes. We use a detailed analysis of the reactivity of the non-spatial and spatial dynamics, stability analyses, and numerical continuation to uncover detailed aspects of this system's pattern-forming potential. In addition to patterning in Turing unstable parameter regimes, reactivity of the spatial system can itself lead to a range of parameters where a spatially uniform state is asymptotically stable, but exhibits transient growth that can induce pattern formation. We show that this occurs in the bistable region of a subcritical Turing bifurcation. Intriguingly, such bistable regions appear in two spatial dimensions, but not in a one-dimensional domain, suggesting important interplays between geometry, transient growth, and the emergence of multistable patterns. We discuss the implications of our analysis for the bacterial soil organic carbon system, as well as for reaction-transport modeling more generally.
\end{abstract}

\section{Introduction}

Pattern formation in diffusive-chemotaxis models has emerged as an important area of study for understanding complex spatial structures in biological, ecological, and chemical systems. Such models also serve as simple examples of pattern formation without the constraints needed in simpler reaction-diffusion frameworks, such as having a short-range activator and a long-range inhibitor. In this paper, we will focus on the analysis of a reaction-diffusion-chemotaxis model of relevance in studying hotspot behaviours in bacterial soil carbon systems, and demonstrate some nontrivial aspects of such systems in the context of transient instability and pattern formation outside of the classical Turing space.

Chemotaxis, the ability of certain bacteria to direct their movement in response to chemical gradients, appears in various biological processes. For instance, in soil ecosystems, motile and chemotactic microorganisms that degrade soil organic carbon (SOC) influence carbon cycling and soil health \cite{vogel2014submicron}. Despite its ecological significance, chemotaxis has largely been neglected in terrestrial carbon cycle models, such as Century \cite{parton1996century} and RothC \cite{coleman1997simulating}, which primarily rely on compartmental systems of spatially implicit differential equations (ordinary or, more recently, of fractional order \cite{bohaienko2023novel}).

This gap was addressed in \cite{hammoudi2018mathematical}, where the authors proposed a novel formulation of the original ODE-based MOMOS model \cite{pansu2010modeling} by incorporating the chemotactic movement of bacteria along with diffusive dynamics. The new model of soil carbon dynamics, which is a reaction-diffusion system with a chemotactic term, accounts for the formation of soil aggregations in the bacterial and microorganism spatial organization (hot spots in soil). This spatial and chemotactic version of MOMOS, supported by validated parameters and experimental data, suggests that accounting for chemotaxis can enhance the predictive capacity of soil carbon models, particularly in understanding soil CO$_2$ emissions and land management strategies \cite{del2005modeling}.


Traditionally, studies on pattern formation in reaction-diffusion and chemotaxis systems have focused on asymptotic stability to explain the emergence of patterns \cite{Maini_1997,Murray_book2,Lacitignola_2015,Giunta2021}. Asymptotic linear stability analysis has been successful in describing {pattern-forming (Turing) instabilities} but fails to capture transient phenomena that may {also give rise to} asymptotically stable patterned states. Transient instability, characterized by significant deviations from {an} equilibrium before eventual stabilization, is increasingly recognized as a potential driver of patterns, particularly in systems exhibiting non-normality \cite{muolo2019patterns,di2018non}. Non-normality refers to the lack of orthogonality in the eigenfunctions of a system, which leads to interactions that can transiently amplify perturbations before eventual decay. Initial amplification is possible in reactive systems, i.e., systems where the numerical abscissa associated with the linearized dynamics, which measures the initial growth rate, is positive \cite{NEUBERT2002}. However, the magnitude of the initial growth rate does not provide information about the maximum amplification that can occur during the transient phase \cite{neubert1997alternatives}. This analysis can be carried out by introducing the concept of "pseudospectra," as developed by Trefethen in \cite{trefethen2005spectra}, who provided a robust framework for analyzing such transient dynamics. 


The topic of transient instabilities was initially explored in the context of pattern formation by \cite{NEUBERT2002}, where reactivity of the (non-spatial) Jacobian matrix was shown to be necessary for diffusion-driven instability in reaction-diffusion systems, and in various related modeling frameworks such as coupled map lattices. Besides this general result, the effects of transient growth of perturbations on the spatial dynamics have been often neglected, partially due to evidence that non-normality-induced patterns are considered rare events and therefore {possibly not} biologically relevant \cite{klika2017significance}.  {An early example of such non-normality-induced pattern formation was made in \cite{ridolfi2011transient}}, and more recently, the phenomenon has been investigated in networked systems \cite{muolo2019patterns} and neural dynamics \cite{di2018non}.  \cite{klika2017significance} showed that the size of a set in the parameter space where transient growth is significant (for  reaction-diffusion models) was limited to small regions near the boundary of the Turing-unstable parameter space.

The implications for chemotaxis-driven patterns {induced by transient growth are} largely unexplored. In this work, inspired by the approach outlined by \cite{klika2017significance}, we analyze the diffusive-chemotaxis MOMOS model for soil carbon dynamics. This model serves as an ideal framework for investigating these effects, given its established utility in modeling SOC dynamics and its potential for incorporating chemotaxis as a driving mechanism for pattern formation \cite{hammoudi2018mathematical, pansu2010modeling}.
%
%
%
 The analysis begins by establishing the set of parameter values corresponding to a stable and reactive equilibrium of the linearized dynamics {(i.e.~parameters where transient patterning instabilities occur, despite the homogeneous equilibrium being stable}). Within this region, we identified the emergence of patterns (referred to as reactive patterns).
For these parameters, we estimated the maximum amplification{, which is an indicator of transient growth rates,} using the indicator proposed by \cite{klika2017significance}. Our findings show that this indicator provides a more accurate lower bound compared to {other measures of transient growth, such as} the Kreiss constant {for the observed amplification of perturbations in our system} \cite{trefethen2005spectra}.

Furthermore, we observed that high maximum amplification, when coupled with a short return time after perturbation, prevents the emergence of stable reactive patterns. To address this, we estimated the return time using a proxy given by the determinant of the linearized Jacobian. Indeed, when the determinant approaches zero, the return time becomes infinitely large, allowing the kinetics to influence the dynamics significantly. Finally, we identified a very small region near the instability boundary where multiple stable reactive patterns emerge{, and investigated these regions using numerical continuation. We find that these stable reactive patterns exist due to the bistability emergent from a nearby subcritical Turing instability, with the size of the bistable region dependent on the geometry. Strikingly, the bistable region does not exist in one spatial dimension, as the criticality of the bifurcation changes between one and two dimensional systems. Overall this leads to an interesting interplay between reactivity and geometry which permits pattern formation outside of the classical Turing unstable regime.}

{Our results have potential biological implications both for the specific applications of the MOMOS model in soil carbon dynamics, and more generally for systems involving cross-diffusion-induced instabilities. The  specific reduced MOMOS model and parameter regime itself was chosen 
to loosely reflect dynamics in the more complex (though spatially-homogeneous) MOMOS models reviewed by \cite{pansu2010modeling}, though we anticipate that the phenomena of transient instability leading to pattern formation is plausible in more complex versions of soil organic carbon dynamics, as well as in other cross-diffusion systems more generally. We also highlight the role of bistability emergent from subcritical Turing instabilities in shaping dynamics outside of the classical Turing space as a general motif which is not as well-studied in biological applications compared to the classical supercritical picture (see, for example, \cite{champneys2021bistability, brennan2025pattern} for recent examples of the role of such phenomena in development, vegetation patterning, and oncology).}

The remainder of this paper is organized as follows: Section \ref{Sec_app_chem_insta} revisits the conditions for asymptotic instability in chemotaxis-diffusion systems and establishes the basic notations used throughout the paper in the context of general chemotaxis models. Section \ref{Sec_characterizing} focuses on characterizing transient instability, exploring the concept of non-normality as a general framework, reactivity as a driver of initial amplification, and the maximum amplification envelope as a characterization of transient dynamics. In Section \ref{Sec_patterns}, we analyze patterns of reactivity in the MOMOS model, identifying regions in the parameter space where transient dynamics lead to the emergence of stable patterns, {which we demonstrate through direct simulation}. {In Section \ref{Sec_continuation}, we give results from numerical continuation and a weakly nonlinear stability analysis to explain the existence of stable patterned states via bistability arising from subcritical Turing bifurcations.} Finally, we summarize the main findings and discuss their implications for soil carbon modeling {and in more general contexts} in Section \ref{Sec_Discussion}.

\section{Asymptotic  instability}
\label{Sec_app_chem_insta}

We consider the general reaction-diffusion-chemotaxis model
 \begin{equation}
\label{eq:general_hu}  
\partial_t u = D_u \Delta u -\beta \, \nabla \cdot( \ell(u) \nabla v) + f(u,v), \quad  \partial_t v = D_v \Delta v + g(u,v),
\end{equation}
with given initial and boundary conditions, $\beta>0$ the chemotactic sensitivity, {$\ell(u)$ the density-dependence of the chemoattraction,} and $D_u, D_v>0$ the diffusion coefficients of each species respectively. {For concreteness, we will consider two-dimensional square domains with both species satisfying periodic boundary conditions unless stated otherwise.} In this section, we recap the main steps that lead to the condition for diffusion-chemotaxis driven instability. While these are standard arguments, and are special cases of more general analyses such as by \cite{ritchie2022turing}, we include them here for completeness and to introduce relevant notations for later. 

Consider $(u_0,v_0)$ a spatially homogeneous equilibrium ($f(u_0,v_0)=g(u_0,v_0) = 0$) which is assumed to be stable in the absence of diffusion and chemotaxis. This assumption requires that the entries of $J_{0}$, the Jacobian matrix evaluated at the steady state,
 \begin{equation}
     \label{eq:jacobian}
     J_{0} = \left [ \begin{array}{cc}
          f_u & f_v \\
          g_u & g_v
     \end{array}
     \right],
\end{equation}  satisfy the conditions 
 \begin{equation}\label{eq:conditions_lin}
f_u + g_v < 0, \quad f_u g_v - f_v g_u > 0.
 \end{equation}
 
 By linearizing the full system \eqref{eq:general_hu} about $(u_0,v_0)$,  we obtain the equation
\begin{equation}
\label{eq:general_full_lin}
   \mathbf{w}_t = L \Delta \mathbf{w} + J_{0} \mathbf{w},
\end{equation}
 for the vector of perturbations  $
     \mathbf{w} = \left [ \begin{array}{c}
           u - u_0 \\
           v - v_0
     \end{array}
     \right ]
$  where $J_{0}$ is given by \eqref{eq:jacobian} and 
    $$
    L = \left [ \begin{array}{cc}
         D_u & -\beta \,\ell(u_0) \\
         0 & D_v
    \end{array}
    \right].
    $$

{We will assume that the domain is sufficiently large, in order to justify the standard linear stability analysis of such systems on $\mathrm{R}^2$ (see, e.g., \cite{Murray_book2}). We take the Fourier transform of $\mathbf{w}$,
  $$  \widetilde{\mathbf{w}}({\boldsymbol\omega},t) = \int_{\mathbb{R}^2} e^{i {\boldsymbol\omega} \cdot \mathbf{x}} \mathbf{w}(\mathbf{x},t) d\mathbf{x},
  $$
and define the matrix
\begin{equation}
    \label{eq:j_matrix}
    \jac = J_{0} - k^2 L = J_{0} - k^2 D - k^2 C,
\end{equation}
where $k = \| {\boldsymbol\omega}\|$ {is the wavenumber (the Euclidean norm of the wavevector $\boldsymbol{\omega}$), and}
$$
    D = \left [ \begin{array}{cc}
         D_u & 0 \\
         0 & D_v
    \end{array}
    \right], \quad C = \left [ \begin{array}{cc}
         0 & -\beta \, \ell(u_0) \\
         0 & 0
    \end{array}
    \right].
$$
Then, we transform \eqref{eq:general_full_lin} in
\begin{equation}  \label{eq:general_linear}
\widetilde{\mathbf{w}}_t
  = \jac \widetilde{\mathbf{w}},
  \end{equation}
and compute the eigenvalues of the matrix $\jac$ {to determine the growth rate of the perturbations, $\mathbf{w}$}. Thus we have
    $$
    \left| \lambda I - J_{0} + L k^2 \right| = 0,
    $$
which gives
 \begin{equation}
 \label{eq:charpol}
    \lambda^2 + \lambda \left( \left(D_u+D_v \right)k^2 - \left(f_u+g_v\right) \right) + h(k^2) = 0,
 \end{equation}
with
   \label{eq:h}
  $$
    h(k^2) =  D_u D_v k^4  - \left(D_u g_v + D_v f_u + \beta g_u \ell(u_0) \right) k^2 + \left(f_u g_v - f_v g_u \right).
    $$
In order to obtain an unstable steady state in presence of diffusion and chemotaxis, we want that the real part of at least one root of the characteristic polynomial \eqref{eq:charpol} is positive for some $k \neq 0$. As a consequence of \eqref{eq:conditions_lin} the term    $\left(D_u + D_v \right) k^2 - \left(f_u + g_v \right) > 0$, {which implies} that imaginary roots of the characteristic polynomial (\ref{eq:charpol}) have negative real part. Focusing on real roots, according to Descartes' rule of signs, we need \(h(k^2) < 0\) for some \(k \neq 0\) to assure the existence of a positive (real) solution of (\ref{eq:charpol}).
Since \( h(k^2) \) is a second-order polynomial in $k^2$ with a positive coefficient for the quadratic term, we require that the discriminant is positive, i.e.,
\begin{equation}\label{eq:hmin}
   \Delta:= \left(D_u g_v + D_v f_u + g_u \beta \,\ell(u_0)\right)^2 - 4 \, D_u \, D_v \, \left(f_u g_v - f_v g_u\right) > 0,
\end{equation}
otherwise, the polynomial would be positive for all values of \( k^2 \).
  Moreover, we should require that there exists at least one positive root to ensure that there is a range of positive values of \( k^2 \) where \( h(k^2) \) assumes negative values. Again, according to 
Descartes' rule of signs, as the existence of a positive root corresponds to a variation of the signs of {the coefficients of} $h(k^2)$, we require that  
\begin{equation}\label{eq:k2minimo}
    \left(D_u g_v + D_v f_u + \beta g_u \, \ell(u_0) \right) >0.
\end{equation}
If both conditions (\ref{eq:hmin}) and (\ref{eq:k2minimo}) and are satisfied then \eqref{eq:charpol} admits a positive solution for a given range of wavenumbers. By solving the equation \(h(k^2) = 0\), and considering that the second inequality in (\ref{eq:conditions_lin}) implies that \(h(k^2)\) has two positive roots, we can determine the range of unstable wavenumbers $k$,
\begin{equation}
\label{eq_range_wave}
    \begin{aligned}
    k_1^2 &:= \frac{A + g_u \beta  \, \ell(u_0) - \sqrt{\Delta}}{2 D_u D_v} < k^2 
    < \frac{A + g_u \beta \,\ell(u_0) +\sqrt{\Delta}}{2 D_u D_v} =: k_2^2 
    \end{aligned}
\end{equation}
where $A=D_u g_v + D_v f_u $.

 Finally, we summarize these {necessary conditions for Turing instability (which become sufficient on sufficiently large domains) as:}
\begin{equation}
    \label{eq:suff_conditions_cd}
    \begin{aligned}
        & f_u + g_v < 0, \quad  f_u g_v - f_v g_u > 0 
        \\
        & D_u g_v + D_v f_u + g_u \beta  \,\ell(u_0) > 2\sqrt{ D_u D_v \left( f_u g_v - f_v g_u \right)} 
        \end{aligned}
\end{equation}
where the first two inequalities ensure stability of $J_{0}$ and the last implies chemotaxis-diffusion instability of $\jac$.
Notice that, for $\beta=0$, we recover the classical and well known conditions for Turing (diffusion-driven) instabilities \cite{Murray_book2}. {The terminology we adopt is that a Turing instability is one where a stable homogeneous equilibrium is driven to instability by the addition of spatial transport terms, leading to a dispersion relation where large and small wavenumbers are stable, but some range of finite wavenumbers are unstable \cite{krause2021modern}. We also note that a Turing instability itself is neither necessary for pattern formation \cite{brauns2020phase, al2022stationary} nor sufficient \cite{krause2024turing}, but it is the classical tool used to study patterning in these kinds of systems.}

\section{Characterizing transient instability}\label{Sec_characterizing}
{We now consider transient instability which can occur when the homogeneous state is asymptotically stable. This is typically studied through properties of non-normality and reactivity of matrices associated with the linearization of the system. We recall that a matrix $\mathbf{A}$ is non-normal if there does not exist a set of its eigenvectors which form an orthonormal basis of the underlying vectorspace. Reactivity of a matrix has several equivalent definitions, but essentially means that linear perturbations of a stable matrix may grow as algebraic transients before being dominated by exponential decay due to the negative real part eigenvalues.} 
} 

We consider a homogeneous equilibrium \((u_0, v_0)\) that is inherently stable for \(\jac\) but can undergo temporary destabilization due to the transient amplification of perturbations. This phenomenon arises from the reactive properties of the non-normal matrix \(\jac\). Specifically, if \(\jac\), which characterizes the linearized dynamics of the system, is non-normal for some \(k\), its eigenvectors do not form an orthogonal basis. Consequently, certain directions may be poorly represented, leading to significant amplification of components along those directions in the eigenvector basis. {The intuition for this is that lack of orthogonality in the eigenfunctions means that linear combinations of them can be small in norm, even if their coefficients are large, due to cancellations. This gives rise to polynomial growth in the transient stage, which is eventually dominated by exponential decay. } 

If, at \((u_0, v_0)\), \(\jac\) is also reactive (as not all non-normal matrices are reactive), this implies that even when \((u_0, v_0)\) is stable, perturbations around it can initially grow in norm before eventually decaying. This behavior facilitates transient deviations from equilibrium. Such transient growth is capable of driving the system out of equilibrium by first leaving the linear regime (where, for sufficiently large times, transient effects vanish as the eigenvalues dominate the system's evolution) and then by allowing nonlinearities in the kinetics to translate these deviations into stable non-homogeneous patterns {(assuming such states exist)}. The transient growth of perturbations thus provides a mechanism for the system to transition toward heterogeneous attractors (despite being asymptotically stable), which we refer to as patterns of reactivity.

\subsection{Measure of non-normality}
The  eigenvalues {of $\jac$} are given by 
\[
\lambda_{\pm} (k^2)= \frac{1}{2} \left( f_u + g_v - k^2 (D_u + D_v) \pm \sqrt{(f_u + g_v - k^2 (D_u + D_v))^2 - 4 h(k^2)} \right),
\]
with corresponding eigenvectors:
$
\mathbf{v}_{\pm}({\boldsymbol k}) = \frac{\mathbf{\tilde{v}}_{\pm}}{\|\mathbf{\tilde{v}}_{\pm}\|},
$
where:
\[
\mathbf{\tilde{v}}_{\pm}(k^2) =
\begin{bmatrix}
      f_u - g_v - k^2 D_u + k^2 D_v \pm \sqrt{(f_u + g_v - k^2 (D_u + D_v))^2 - 4 h(k^2)} \\\\
      2 g_u
\end{bmatrix}.
\]
 The degree of non-normality of the matrix of eigenvectors \( V_{k} = [\mathbf{v}_{+}(k^2), \mathbf{v}_{-}(k^2)] \) can be measured by evaluating how parallel the eigenvectors \( \mathbf{v}_{\pm}(k^2) \) are or, equivalently, how close the orthogonal complement of \( \mathbf{v}_{+}(k^2) \) is to being orthogonal to \( \mathbf{v}_{-}(k^2) \). We then introduce the quantity 
\[
\delta(k^2) = |\langle \mathbf{v}_+^\perp, \mathbf{v}_- \rangle| = \frac{4 \, |g_u|\, \sqrt{|(f_u-g_v+k^2(D_v-D_u))^2+4\,g_u\,(f_v+k^2\,\beta\,\ell(u_0))|}}{\|\mathbf{\tilde{v}}_+\| \,\|\mathbf{\tilde{v}}_-\|},
\]
which is equal to $1$ when the vectors \( \mathbf{v}_{+}(k^2) \) and \( \mathbf{v}_{-}(k^2) \) are orthogonal and vanishes when 
 they are perfectly parallel (in that case $\jac$ is not diagonalizable,  a case we do not consider in our analysis).

{We compute the denominator of this quantity as,} 
$$
\|\mathbf{\tilde{v}}_+\| \,\|\mathbf{\tilde{v}}_-\|= 4 |g_u|\, \sqrt{(f_u-g_v+k^2(D_v-D_u))^2+ (g_u+ f_v\,+ k^2\,\beta \, \ell(u_0))^2}
$$ so that 
\begin{equation}\label{eq:non_normality}
    \delta(k^2) = \sqrt{\frac{|(f_u-g_v+k^2(D_v-D_u))^2+4\,g_u\,(f_v+k^2\,\beta\,\ell(u_0))|}{(f_u-g_v+k^2(D_v-D_u))^2+ (g_u+ f_v +\,k^2\,\beta \, \ell(u_0))^2}}.
\end{equation}

The maximum non-normality occurs for values of $k^2$ corresponding to the minimum of $\delta(k^2)$, which vanishes when $\jac$ is not diagonalizable and assumes a value of $1$ when $\jac$ is normal.

\subsection{Reactivity as initial amplification}
\label{sec:react_in}
 The dynamics of perturbations of magnitude \( \|\widetilde{\mathbf{w}}_0\| \) of an equilibrium {are} determined by the solution of the linearized system (\ref{eq:general_linear}). Specifically, the amplification envelope is defined as:
\[
\rho_{k}(t) = \displaystyle \max_{\|\widetilde{\mathbf{w}}_0\|\neq 0} \frac{\|e^{\jac t} \widetilde{\mathbf{w}}_0 \|}{\|\widetilde{\mathbf{w}}_0\|} = \|e^{\jac t}\|=\|V_k\, e^{\Lambda_k\, t} \, V_{k}^{-1}\|,
\]
where $\Lambda_{k}$ is the diagonal matrix of the eigenvalues {and $V_{k}$ the matrix of eigenvectors}. 
It is well known (see for example \cite{trefethen2005spectra}) that the spectral abscissa \( \alpha(\jac) \) characterizes the behavior of the derivative of \( \rho_{k}(t) \) at the asymptotic limit \( t \to \infty \):
\[
\lim_{t \to \infty} \frac{d \rho_{k}(t)}{dt} = \lim_{t \to \infty } \frac{d}{dt} \| e^{t\, \jac} \| = \lim_{t \to \infty} t^{-1} \log \| e^{t\, \jac} \| = \alpha(\jac)=\mathrm{Re}(\lambda_+({k^2})).
\]
{ Conversely, the initial behavior, determined by the limit of the derivative of \( \rho_{k}(t) \) as \( t \to 0 \) is characterized\footnote{in the Hilbert space case \cite[Eq.~(14.2), p.~137]{trefethen2005spectra}.} by the largest eigenvalue of the Hermitian part of $\jac$, i.e.  \
\[
\lim_{t \to 0} \frac{d \rho_{k}(t)}{dt} = \lim_{t \to 0 } \frac{d}{dt} \| e^{t\, \jac} \| = \lim_{t \to 0} t^{-1} \log \| e^{t \, \jac} \| = r(\jac)
 = \max \lambda(H(\jac)),
 \]
 where $H(\jac) {= (\jac + \jac^H)/2}$. }
In general the following relation occurs
$$
e^{\alpha(\jac)\,t} \leq \|e^{t\, \jac} \| \leq e^{r(\jac)\,t}.
$$
 When \( r(\jac) > 0 \), perturbations always exhibit initial growth, and, due to the non-normal nature of \(\jac\), they can be amplified sufficiently to overcome the barrier separating the attraction basins of the homogeneous stable equilibrium and non-homogeneous states.

 {The quantity $r(\jac)$, also known as the \textit{numerical abscissa} \cite{trefethen2005spectra} or \textit{logarithmic norm} \cite{moler2003nineteen} in different research fields, is here referred to as \textit{reactivity}, following its introduction in ecological systems \cite{neubert1997alternatives} as an alternative measure to the sole focus on resilience}
 (associated with asymptotic behavior), which was later studied as a necessary condition for the local dynamics of reaction-diffusion systems that give rise to Turing patterns \cite{NEUBERT2002}. {We remark that \cite{neubert1997alternatives} originally defined reactivity in the context of the spectrum of the Hermitian part of the linear system; see Equations (10)-(13) there for an elementary justification of this definition.} This concept was further generalized in the work \cite{mari2017generalized} and more recently applied in this generalized context to {(ODE)} carbon dynamics models in the study \cite{diele2023soc}.

Before starting the analysis, we provide some results 
 which will be useful in what follows. Let us consider $J_{0}$, which corresponds to $\jac$ with ${k} ={0}$ ({see Equation \ref{eq:j_matrix}}).
\begin{proposition}\label{prop:J*reactive} If \(J_{0} \) has a negative trace, then \( J_{0} \) is reactive iff the following condition holds:
\[
f_u g_v - \frac{(f_v + g_u)^2}{4} < 0.
\]
 \end{proposition}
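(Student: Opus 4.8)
The plan is to work directly from the definition of reactivity recalled above, namely that $J_0$ is reactive precisely when $r(J_0) = \max \lambda(H(J_0)) > 0$, where $H(J_0) = (J_0 + J_0^H)/2$. Since $J_0$ is a real matrix, its Hermitian part coincides with its symmetric part,
\[
H(J_0) = \begin{bmatrix} f_u & \tfrac{1}{2}(f_v + g_u) \\[2pt] \tfrac{1}{2}(f_v + g_u) & g_v \end{bmatrix},
\]
a symmetric $2\times 2$ matrix whose two (necessarily real) eigenvalues are immediate to compute.

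First I would write the eigenvalues of $H(J_0)$ via the standard quadratic formula, which after simplifying the discriminant yields
\[
\lambda_\pm(H(J_0)) = \frac{(f_u + g_v) \pm \sqrt{(f_u - g_v)^2 + (f_v + g_u)^2}}{2}.
\]
The radicand is a manifestly nonnegative sum of squares, confirming reality of the spectrum, and the reactivity condition $r(J_0) > 0$ is then equivalent to $\lambda_+(H(J_0)) > 0$, i.e.\ to the single inequality
\[
\sqrt{(f_u - g_v)^2 + (f_v + g_u)^2} > -(f_u + g_v).
\]

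The key step, and the only place where the trace hypothesis is used, is the squaring of this inequality. Because $\mathrm{tr}(J_0) = f_u + g_v < 0$ by assumption, the right-hand side $-(f_u+g_v)$ is strictly positive; both sides are then nonnegative, so the inequality is equivalent to its squared form $(f_u - g_v)^2 + (f_v + g_u)^2 > (f_u + g_v)^2$. Expanding and using the cancellation $(f_u - g_v)^2 - (f_u + g_v)^2 = -4 f_u g_v$ collapses this to $(f_v + g_u)^2 > 4 f_u g_v$, which is exactly the claimed criterion $f_u g_v - \tfrac{1}{4}(f_v + g_u)^2 < 0$.

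I expect no genuine obstacle here: the argument is a short chain of elementary manipulations. The lone subtlety worth flagging is that the equivalence of the radical inequality with its square relies crucially on the negative-trace hypothesis to fix the sign of the right-hand side; without it one would need to treat the case $-(f_u+g_v) \le 0$ separately, where the square-root inequality (and hence reactivity) holds automatically whenever the off-diagonal or diagonal entries are not simultaneously degenerate.
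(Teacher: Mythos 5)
Your proof is correct and follows essentially the same route as the paper: both reduce reactivity to the sign of the largest eigenvalue of the symmetric part $H(J_0)$ and use the negative trace to conclude that this sign is governed by $\det H(J_0) = f_u g_v - \tfrac{1}{4}(f_v+g_u)^2$. The paper reads this off via Descartes' rule of signs on the characteristic polynomial, while you compute $\lambda_+$ explicitly and square the resulting inequality, but the substance is identical.
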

 \begin{proof}
 For \( J_{0} \) to be reactive {(i.e.~$r(J_{0})>0$)} , it is sufficient to require that the Hermitian part of \( J_{0} \),
\[
H(J_{0}) = \left[ \begin{array}{cc}
         f_u & \frac{f_v + g_u}{2} \\\\
         \frac{f_v + g_u}{2} & g_v
    \end{array} \right],
\]
has at least one positive eigenvalue. By applying Descartes' rule of signs to the characteristic polynomial
\[
\lambda^2 - \lambda \, (f_u + g_v) + f_u g_v - \frac{(f_v + g_u)^2}{4},
\]
and taking into account that the trace \( f_u + g_v < 0 \), the result follows.
 \end{proof}
 Notice that, when  \( J_{0} \) has negative trace and it is  not reactive, then,
\[
f_u g_v > \frac{(f_v + g_u)^2}{4} > 0.
\]
Then, \( f_u \) and \( g_v \) must have the same sign, and since \( J_{0} \) has a negative trace, both are negative. Consequently,
\[
A = D_u g_v + D_v f_u < 0.
\]

 To determine the general conditions under which reactivity is positive, we evaluate the Hermitian part of $\jac${, which is given by}
$$
    H\left(\jac\right) = \, \left [ \begin{array}{cc}
         f_u-k^2\, D_u& 
         \frac{(f_v+g_u)+k^2\,\beta \ell(u_0) }{2} \\\\
         \frac{(f_v+g_u)+k^2\, \beta \ell(u_0) }{2} & g_v-k^2 D_v
    \end{array}
    \right],
$$
and find conditions for the largest eigenvalue to be positive for some $k$.
 The characteristic polynomial is given by 
 \begin{equation}
 \label{eq:charpol2}
    \lambda^2 + \lambda \left( \left(D_u+D_v \right)k^2 - \left(f_u+g_v\right) \right) + \tilde h(k^2),
 \end{equation}
with $\tilde h(k^2)\,=\,h(k^2)- \left(\frac{f_v-g_u\,+\, \beta \ell(u_0)\, k^2}{2}\right)^2$.

{To ensure positive reactivity, and hence transient instability,} we want that the real part of at least one root of the  polynomial \eqref{eq:charpol2} is positive for  some $k \neq 0$. According to Descartes' rule of signs, we need \(\tilde h(k^2) < 0\) for some \(k \neq 0\) to assure the existence of a positive (real) solution of (\ref{eq:charpol}).

 From the definition of \(\tilde{h}(k^2)\), it is clear that if \(\jac\) is unstable, i.e., \(h(k^2) < 0\) for some \(k\neq 0\), then \(\tilde{h}(k^2) < 0\), making \(\jac\) reactive. Considering that the sign of the constant term of \(\tilde{h}(k^2)\) determines whether \(\jac\), when assumed asymptotically stable, is reactive or not, we explicitly write the expression of \(\tilde{h}(k^2)\) as:
 $$
    \tilde h(k^2) =  \left(D_u D_v -\frac{\beta^2 \ell^2(u_0)}{4}\right)k^4  - \left(A + \frac{(f_v+g_u)\beta \ell(u_0) }{2}\right) k^2 + f_u g_v -\frac{(f_v+g_u)^2}{4},
    $$
    and we distinguish the following cases:

\begin{enumerate}
\item $0\leq D_u D_v< \frac{\beta^2 \ell^2(u_0)}{4}$. In this case, \( \tilde h(k^2) \) is a second-order polynomial with a negative coefficient for the quadratic term. If the discriminant is negative, then \( \tilde{h}(k^2) < 0 \) for all \( k^2 \). Otherwise, if the discriminant is positive, then \( \tilde{h}(k^2) < 0 \) for some \( k^2 > 0 \), and no further conditions are necessary.

\item $D_u D_v \geq \frac{\beta^2 \ell^2(u_0)}{4}\geq 0$ and $J_{0}$ reactive.
   In this case, \( \tilde h(k^2) \) is a second-order polynomial in \( k^2 \) with a positive coefficient for the quadratic term and a discriminant given by

\[
\tilde{\Delta} = \left(A + \frac{(f_v + g_u)\beta \ell(u_0)}{2}\right)^2 - \left(D_u D_v - \frac{\beta^2 \ell^2(u_0)}{4}\right)(4 f_u g_v - (f_v + g_u)^2),
\]
which is positive. According to Descartes' rule of signs, the polynomial \( \tilde{h}(k^2) \) admits at least one positive root, thereby ensuring the existence of a range of wavenumbers \( k^2 \) for which \( \tilde{h}(k^2) \) takes negative values.

 \item $D_u D_v \geq \frac{\beta^2 \ell^2(u_0)}{4}>0$ and $J_{0}$ not reactive. 
In this case, we must require that \( \tilde{\Delta} > 0 \); otherwise, the polynomial would be positive for all values of \( k^2 \). Moreover, since the existence of a positive root corresponds to a variation in the signs of the coefficients of \( \tilde{h}(k^2) \), we also have to require that
\begin{equation}\label{eq:k2minimo2}
  - \frac{(f_v+g_u)\beta \ell(u_0) }{2} < A <0.
\end{equation}
where $A<0$ ($J_{0}$ is assumed stable and not reactive). 
\end{enumerate}

 The range of reactive wavenumbers $k^2$ are determined by the roots 
\begin{equation}
\label{eq_range_wave_react}
    \begin{aligned}
    \tilde k_m &:= \frac{A +\frac{(f_v+g_u)\beta \ell(u_0) }{2} - \sqrt{\tilde \Delta}}{2\left(D_u D_v -\frac{\beta^2 \ell^2(u_0)}{4}\right)}, \quad \tilde k_p :=\frac{A +\frac{(f_v+g_u)\beta \ell(u_0) }{2}+\sqrt{\tilde \Delta}}{2 \left(D_u D_v -\frac{\beta^2 \ell^2(u_0)}{4}\right)} 
    \end{aligned}.
\end{equation}
 Hence, in the first case, $\jac$ is reactive for all values of $k^2$ if $\tilde{\Delta} < 0$, and for values $0 < k^2 < \min \{ \max \{0,\tilde{k}_p\},\max \{0,\tilde{k}_m\}\}$ or $k^2 > \max \{ \max \{0,\tilde{k}_p\},\max \{0,\tilde{k}_m\}\}$. In the second case, $\jac$ is reactive for 
$0< k^2 < \tilde{k}_p$. In the third case, $\jac$ is reactive for 
$\tilde{k}_m < k^2 < \tilde{k}_p$. 

Finally, we summarize the conditions for diffusion-chemotaxis-driven initial instability as follows:

\begin{proposition} \label{prop_15}
    Under the hypothesis of $J_{0}$ stable then, $\jac$ is reactive  if any of the following conditions hold:
$$
\begin{cases}
  & |\beta\, \ell(u_0)|> 2\,A_1, \quad {\textrm{(Case 1)}}, \\\\
  &|\beta\, \ell(u_0)|\leq 2\,A_1, \quad \text{and} \quad  J_{0} \quad\text{is reactive } (A_3>2A_2), \quad {\textrm{(Case 2)}}\\\\
   &|\beta\, \ell(u_0)|\leq 2\,A_1 \quad \text{and} \quad  J_{0} \quad \text{is not reactive }  (A_3\leq 2A_2),\quad \text{and}\\\\
   & {\,\sqrt{A_1^2 -\frac{\beta^2\, \ell(u_0)^2}{4}}} \, \sqrt{4A_2^2 - A_3^2} -\frac{(f_v + g_u)\beta\, \ell(u_0)}{2}  < A < 0, \quad {\textrm{(Case 3)}},
\end{cases}
$$
where  \( A= D_u g_v + D_v f_u  \), \( A_1 = \sqrt{D_u D_v}\geq 0 \), \( A_2 = \sqrt{\det(J_{0})} >0\) and \(A_3 = |f_v - g_u| \geq 0\).
\end{proposition}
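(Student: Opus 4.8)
The plan is to recognize that Proposition \ref{prop_15} is a compact restatement, in the variables $A, A_1, A_2, A_3$, of the three-case analysis of the sign of $\tilde h(k^2)$ carried out immediately above it. Thus the proof is essentially a translation exercise together with one genuine algebraic manipulation in the third case. First I would record the dictionary between the two sets of quantities. The threshold separating a negative from a positive leading coefficient of $\tilde h(k^2)$, namely $D_u D_v$ versus $\tfrac{\beta^2 \ell^2(u_0)}{4}$, becomes $|\beta \ell(u_0)|$ versus $2 A_1$ because $A_1 = \sqrt{D_u D_v} \ge 0$. Next I would verify that reactivity of $J_0$, characterized in Proposition \ref{prop:J*reactive} by $f_u g_v - \tfrac{(f_v+g_u)^2}{4} < 0$, is equivalent to $A_3 > 2A_2$. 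This rests on the elementary identity
\[
4 f_u g_v - (f_v+g_u)^2 = 4\big(f_u g_v - f_v g_u\big) - (f_v-g_u)^2 = 4 A_2^2 - A_3^2,
\]
which follows from $(f_v+g_u)^2 = (f_v-g_u)^2 + 4 f_v g_u$ together with $A_3^2 = (f_v-g_u)^2$ and $A_2^2 = \det(J_0)$. Hence ``$J_0$ reactive'' $\iff 4A_2^2 - A_3^2 < 0 \iff A_3 > 2A_2$, and ``$J_0$ not reactive'' $\iff A_3 \le 2A_2$. I would also recall from the observation following Proposition \ref{prop:J*reactive} that a stable, non-reactive $J_0$ forces $f_u, g_v < 0$, hence $A = D_u g_v + D_v f_u < 0$ automatically.

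With this dictionary, Cases 1 and 2 are immediate. Case 1 of the text ($0 \le D_u D_v < \tfrac{\beta^2\ell^2(u_0)}{4}$, where $\tilde h$ has negative leading coefficient and is therefore negative for some $k^2$ irrespective of the sign of $\tilde\Delta$) translates verbatim into $|\beta \ell(u_0)| > 2A_1$. Case 2 of the text (positive leading coefficient with $J_0$ reactive, where $\tilde\Delta$ is automatically positive because the subtracted product $\big(A_1^2 - \tfrac{\beta^2\ell^2(u_0)}{4}\big)\big(4A_2^2 - A_3^2\big)$ is nonpositive) becomes $|\beta\ell(u_0)| \le 2A_1$ together with $A_3 > 2A_2$.

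The only real work is Case 3, which I expect to be the main obstacle. Here $J_0$ is stable and not reactive, so $A < 0$ and $4A_2^2 - A_3^2 \ge 0$, and the preceding analysis requires both $\tilde\Delta > 0$ and the sign condition \eqref{eq:k2minimo2}, i.e. $-\tfrac{(f_v+g_u)\beta\ell(u_0)}{2} < A$. Writing $B := A + \tfrac{(f_v+g_u)\beta\ell(u_0)}{2}$, condition \eqref{eq:k2minimo2} says exactly $B > 0$, while using the identity above,
\[
\tilde\Delta = B^2 - \Big(A_1^2 - \tfrac{\beta^2\ell^2(u_0)}{4}\Big)\big(4A_2^2 - A_3^2\big) .
\]
In this regime both factors of the subtracted product are nonnegative, so its radicand is nonnegative; combined with $B > 0$ I may extract square roots monotonically, so $\tilde\Delta > 0$ is equivalent to $B > \sqrt{A_1^2 - \tfrac{\beta^2\ell^2(u_0)}{4}}\,\sqrt{4A_2^2 - A_3^2}$, which rearranges to the displayed lower bound on $A$, the appended $A < 0$ merely recording stability and non-reactivity of $J_0$.

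For the sufficiency asserted by the proposition I need the reverse implication, which is the clean direction: the displayed bound places $A$ above a nonnegative quantity minus $\tfrac{(f_v+g_u)\beta\ell(u_0)}{2}$, hence $B > \sqrt{\,\cdot\,}\sqrt{\,\cdot\,} \ge 0$, giving \eqref{eq:k2minimo2}; and squaring this inequality between two positive quantities returns $\tilde\Delta > 0$, so the text's Case~3 conclusion of reactivity applies. The delicate point throughout is precisely that the square-root extraction is legitimate only because \eqref{eq:k2minimo2} fixes the sign of $B$; without controlling that sign the equivalence between the quadratic discriminant condition and the single displayed inequality would break down.
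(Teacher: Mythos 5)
Your proposal is correct and follows essentially the same route as the paper's proof: Cases 1 and 2 are read off from the preceding case analysis of $\tilde h(k^2)$, and Case 3 reduces to combining $\tilde\Delta>0$ with condition \eqref{eq:k2minimo2} via the identity $4f_ug_v-(f_v+g_u)^2=4A_2^2-A_3^2$ and a sign-controlled square-root extraction. Your write-up is somewhat more explicit than the paper's (which asserts the identity and the equivalence without spelling out why the square roots are legitimate), but the underlying argument is identical.
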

\begin{proof}
 We only need to analyze the case when \(|\beta\, \ell(u_0)|\leq 2\,A_1\), and $J_{0}$ is  not reactive {corresponding to case 3 above, as the other cases immediately ensure reactivity (as shown above)}. Imposing \( \tilde{\Delta} > 0 \) requires
\[
\left( A + \frac{(f_v + g_u)\beta \ell(u_0)}{2} \right)^2 > \left( D_u D_v - \frac{\beta^2 \ell^2(u_0)}{4} \right) \left( 4 f_u g_v - (f_v + g_u)^2 \right).
\]
By imposing \eqref{eq:k2minimo2}, we can equivalently express the above condition as
\[
A + \frac{(f_v + g_u)\beta \ell(u_0)}{2} > \sqrt{D_u D_v - \frac{\beta^2 \ell^2(u_0)}{4}} \sqrt{4 f_u g_v - (f_v + g_u)^2}, \quad A < 0.
\]
Using the adopted notation and noting that
\[
4 f_u g_v - (f_v + g_u)^2 = 4A_2^2 - A_3^2>0 \quad \iff A_3< 2A_2,
\]
the result follows.
\end{proof}

The above proposition shows how chemotaxis, diffusion, and local dynamics act in determining the reactivity of the system. The term $|\beta \ell(u_0)|$ represents the strength of chemotaxis, while $A_1 = \sqrt{D_u D_v}$ reflects the stabilizing effects of diffusion. Condition 1 emphasizes that strong chemotaxis ($|\beta \ell(u_0)| > 2A_1$) can independently drive reactivity by overcoming the stabilizing influence of diffusion. Condition 2 demonstrates that moderate chemotaxis ($|\beta \ell(u_0)| \leq 2A_1$) can still result in reactivity if the local Jacobian $J_{0}$ is inherently reactive. Condition 3 represents a finely tuned scenario where both chemotaxis and the local dynamics are weak. In this case, reactivity arises from a specific balance between diffusion, chemotaxis, and local dynamics. {An interesting observation here, related to the implication of reactivity in Cases 1 and 3 above, is that the reactivity of the Jacobian in the absence of transport ($J_{0}$) is not necessary to drive reactivity of $\jac$, or even of patterning instabilities, as it is in the reaction-diffusion case \cite{NEUBERT2002}.}

    

 Detecting reactivity regions within the stability region of a diffusion-chemotaxis model will indicate the potential for the emergence of {patterns which arise directly due to non-normality.}

\subsection{The amplification envelope}
Reactivity is a measure of solution behavior as $t \to
0$, and thus complements stability, which describes
solution behavior as $t \to \infty$. For {a} non-normal Jacobian, neither describes all the transient behavior between zero and infinity. If $\jac$ is reactive and solutions
can grow in magnitude, we can ask how large a perturbation can possibly get, and how long growth can
continue.  For {a} {non-normal} Jacobian, this transient behavior is not described  as $t \to
0$, or $t \to \infty$  but by the amplification envelope curve $\rho_{k}(t)=\|e^{\jac \, t}\|$ at  intermediate values of $t$ \cite{neubert1997alternatives}. 

 In the book by \cite{trefethen2005spectra}, several bounds of $\rho_{k}(t)=\|e^{\jac \, t}\|$ can be found. For example, an estimate related to the non-normality of the matrix $\jac$ through the condition number $\mu(V_{k})$ of the matrix of eigenvectors and to the largest eigenvalue   $\lambda_+(k^2)$ is given by the upper bound
\[
\rho_{k}(t) =  \|e^{\jac \, t} \| = \|V_{k}\, e^{\Lambda_{k}\, t} \, V_{k}^{-1}\| \leq \|V_{k}\|\, \| V_{k}^{-1}\| \, \|e^{\Lambda_{k}\, t}\| \,=\, \mu(V_{k}) \, e^{\mathrm{Re}(\lambda_+({k}))\, t}.
\]
This bound provides a reference point, but for sharper information we turn to the Kreiss constant $\mathcal{K}(\jac)$,
\begin{equation}\label{kreiss}
    \rho_{k}(t) \geq \mathcal{K} (\jac)=\sup_{\epsilon >0}\frac{\alpha_\epsilon (\jac)}{\epsilon}\, ,
\quad \quad \forall t\geq 0,
\end{equation}
where $\alpha_\epsilon(\jac)$ denotes the $\epsilon$-pseudospectral abscissa of $\jac$ \cite{trefethen2005spectra}. Moreover, in the specific case of {the} $2$ dimensional matrix $\jac$ it holds that
$$
\mathcal{K} (\jac) \leq \rho_{k}(t) \leq 2\, e  \, \mathcal{K} (\jac), \quad \quad \forall t\geq 0.
$$

 Another useful estimate relating \(\rho_{k}(t)\) to the measure of non-normality is provided in \cite{klika2017significance}. Under the assumption that \(\jac\) is stable, we define \(\Sigma = \lambda_+ - \lambda_-\), and it follows that \(\rho_{k}(t) \approx \chi(t)\), where 
\begin{equation}\label{eq:chi_real}
   \chi(t) := \delta^{-1} \left(e^{\lambda_+ t} - e^{\lambda_- t}\right), 
\end{equation}
with a maximum given by
\begin{equation}\label{eq:max_chi_real}
\chi^* = \delta^{-1} \left(\frac{\lambda_-}{\lambda_+}\right)^{\lambda_+ / \Sigma} \left(1 - \frac{\lambda_+}{\lambda_-}\right),
\end{equation}
at 
$
t^* = \frac{1}{\Sigma} \ln \frac{\lambda_-}{\lambda_+},
$
in the case where \(\lambda_- < \lambda_+ <0\) are real. Otherwise, when the eigenvalues are complex, we have
\[
\chi(t) := e^{\Re\lambda t} \left( \delta^{-1} \left[ 1 - e^{-2 \delta \Im\lambda t} \right] + 1 \right),
\]
where \(\Im\lambda = \Im\lambda_+ = -\Im\lambda_- > 0\) and \(\Re\lambda = \Re\lambda_- < 0\). In this case, the maximum of the envelope of \(\chi(t)\),
\[
\overline{\chi}(t) = 2 \delta^{-1} e^{\Re\lambda t} \lvert \sin(\Im\lambda t) \rvert,
\]
is reached at
\[
t^* = \frac{1}{\Im\lambda} \arctan\left(-\frac{\Im\lambda}{\Re\lambda}\right).
\]

 In the following we will exploit these estimates in order to relate their values to the onset of {patterns of reactivity} in the dynamics of a particular diffusion-chemotaxis system.

\section{Patterns of reactivity in the MOMOS Model}\label{Sec_patterns}

In this section, we focus on the reaction-diffusion chemotaxis MOMOS model \cite{hammoudi2018mathematical}.  A simplified version with only two compartments is considered: the microbial biomass and the soil organic matter {are} represented by the state variables $u$ and $v$ respectively. The model is then given by
\begin{equation}\label{eq:model}
\begin{aligned}
&\partial_t u  = D_u \Delta u -\beta \, \nabla \cdot( \ell(u) \nabla v) - k_1 \, u - q \, u^2 + k_2 \,   v, \\
&\partial_t v = D_v \Delta v + k_1 \, u \, -k_2 \, v  + \, c.
\end{aligned}
\end{equation}
Here the parameter $\beta$ is the chemotactic sensitivity, whereas $D_u$ and $D_v$ are the diffusion parameters of the microbial mass and of the soil organic matter, respectively. 
The function $l(\cdot)$ is the density-dependent chemotactic response. The parameter $k_1$ represents the microbial mortality rate, $k_2$ the soil carbon degradation rate, $q$ the metabolic quotient {(namely, the carbon respiration rate per unit biomass of bacteria)}, and $c$ the soil carbon input. All the parameters in  model (\ref{eq:model})  are assumed to be positive constants. The domain $\Omega$ is again taken as a square, and periodic boundary conditions, except in Section \ref{Sec_continuation} where Neumann (no-flux) boundary conditions are more convenient. Initial conditions are taken as small random perturbations of the homogeneous equilibrium $(u_0,v_0)${, which are normally distributed across the spatial domain with standard deviation given by $\eta$} . In the following, we will focus on the {simplest chemotactic sensitivity of linear density dependence given by} $\ell(u)=u$. 

The  model (\ref{eq:model}) admits the two constant solutions as spatially homogeneous equilibria: $\left(u_i, \displaystyle \frac{u_i(q\,u_i+k_1)}{k_2}\right)$, for $i=0,1$, where $u_0=\sqrt{\frac{c}{q}}$ and  $u_1=-\sqrt{\frac{c}{q}}$.
Due to biological considerations,  we will focus only on the unique positive and hence feasible spatially homogeneous equilibrium:  

\begin{equation*}
  P_0=(u_0,v_0)=\,\left( \sqrt{\dfrac{c}{q}}, \dfrac{k_1}{k_2} \sqrt{\frac{c}{q}} +\dfrac{c}{k_2} \right).
 \end{equation*}

 The spatially homogeneous solution $P_0$ is always linearly stable in the absence of diffusion ($D_u=D_v=0$) and chemotaxis ($\beta=0$). In fact, the Jacobian matrix of the reaction terms, evaluated at $P_0$,
\begin{equation*}
 \label{J_Momos}
     J_{0}=\left[ 
     \begin{array}{ccc}
        f_u 
        && f_v \\
        g_u  && g_v
     \end{array}\right]=\left[ 
     \begin{array}{ccc}
        -\,k_1 -2\sqrt{c\,q} 
        && k_2 \\
        k_1  && -k_2
     \end{array}\right],
 \end{equation*}
 has  $\det (J_{0})=2\,k_2\,\sqrt{cq}>0$ and ${\mathrm{tr}}(J_{0})=-2\,\sqrt{cq}-k_1-k_2 <0$.
 Equation 
(\ref{eq:suff_conditions_cd}) indicates that when 
\begin{equation}\label{eq:chem_cond_momos}
 -D_u k_2 - D_v (k_1+2 \sqrt{cq}) + k_1 \beta \sqrt{\frac{c}{q}} > 2\sqrt{ 2 D_u D_v k_2 \sqrt{cq}},     
 \end{equation}
 i.e. when 
 \begin{equation}\label{bifurcation_curve}
    \beta> \beta_c:= \displaystyle \frac{\sqrt{q}}{k_1\sqrt{c}}\left(D_u\,k_2+D_v\,k_1+2\,D_v\sqrt{cq}+\sqrt{8\,D_u\,D_v\,k_2\sqrt{cq}} \right),
    \end{equation}
    then the spatially homogeneous equilibrium $P_0$ of model (\ref{eq:model}) undergoes {a} chemotaxis-driven instability. Note that the emergence of patterns can be due only to the chemotaxis term as, in the absence of chemotaxis, i.e. for $\beta=0$, Equation (\ref{eq:chem_cond_momos}) cannot be satisfied.

 In \cite{noi}, symplectic techniques have been applied to numerically approximate the spatial patterns arising as non-homogeneous solutions of the MOMOS model (\ref{eq:model}) due to the asymptotic instability of the matrix $\jac$ {(i.e.~those arising from standard chemotaxis-driven Turing instabilities)}. Recent studies have also explored model order reduction or data-driven approaches for efficiently capturing complex spatio-temporal dynamics and Turing-type patterns in reaction-diffusion systems \cite{BMS2021, AMS2023, AMS2024}.

In this paper we are interested in detecting  patterns due to transient instability generated by the non-normal nature of 
\[
\jac = \begin{bmatrix}
         -k_1-2\sqrt{c q} - k^2 D_u & k_2 + k^2 \beta \sqrt{\frac{c}{q}}\, \\\\
         k_1 & -k_2 - k^2 D_v
    \end{bmatrix},
\]
when \( J_{0} \) is stable {and} \( \jac \) is reactive. From Proposition \ref{prop_15}, when \( J_{0} \) is stable \( \jac \) is reactive if any of the following conditions hold:
\begin{enumerate}
    \item \( \frac{|\beta|}{\sqrt{q}}> 2 \sqrt{\frac{D_u\, D_v}{c}} \),
    \item \( \frac{|\beta|}{\sqrt{q}}< 2 \sqrt{\frac{D_u\, D_v}{c}} \), \,\,and \( \, q^{1/4} > \frac{|k_2 - k_1|}{\sqrt{8 \, k_2 \, \sqrt{c }}} \),
    \item 
    \(
        \frac{|\beta|}{\sqrt{q}}< 2 \sqrt{\frac{D_u\, D_v}{c}}, \,\, q^{1/4} < \frac{|k_2 - k_1|}{\sqrt{8 \, k_2 \, \sqrt{c }}},
    \)\,\, and
    
    \(         \sqrt{D_u D_v - \beta^2 \frac{c}{4q}} \, \sqrt{8 \, k_2 \sqrt{c\, q}} - \frac{(k_2 + k_1)\, \beta \sqrt{\frac{c}{q}}}{2} < -k_2 D_u - D_v (k_1 + 2 \sqrt{c q}) < 0 \),
\end{enumerate}

The{se} three {cases} highlight the roles of the chemotaxis coefficient ($\beta$), the nonlinearity (quadratic)  parameter ($q$), and the diffusion coefficients ($D_u$ and $D_v$) in determining reactivity. We see that a  
strong chemota{ctic response, given by large $\beta$,} can independently induce reactivity (Case 1), while moderate or weak chemotaxis requires support from local gradients or a finely balanced system.
 Higher $q$ generally stabilizes the system by raising the thresholds for reactivity, resisting transient amplification.
 Diffusion stabilizes the system in all cases, opposing the destabilizing effects of chemotaxis and requiring larger $\beta$ or more significant local gradients for reactivity to arise.

 We fix parameters $D=D_u=D_v=0.6$, $k_1=0.4$ and $k_2=0.6$ and $c=0.8$ as in \cite{noi}. The bifurcation diagram in Figure \ref{fig:momos}(a)  shows the region in the \((\beta, q)\) parameter space where $\jac$ is both stable (for all $k$)  and reactive (for at least one $k$), and {hence} spatial pattern initiation {due to transient instability} may arise. {We remark that, besides a very small region for small $\beta$ and $q$ in the bottom-left (not shown given its size), which is due to Case 3, the remainder of the transient instability region is due to Case 1 above. }

 {We systematically ran simulations across one of the boundary data points in Figure \ref{fig:momos}(a) corresponding to $q = 0.0196639$. We ran simulations for varying values of $\beta$ and sizes of the initial random perturbation, given by $\eta$, for $T=10^5$ time units. We used the following functional (see \cite{berding1987heterogeneity}) as a proxy for spatial heterogeneity to differentiate between patterned and homogeneous states:    \begin{equation}\label{heterogeneity_functional}
        E(u) = \left(\int_\Omega ||\nabla u||^2d\mathbf{x}\right)^{\frac{1}{2}} = ||\nabla u||_{L^2},
    \end{equation}
    where the integral is over the spatial domain $\Omega = [0,L]$ in 1D, or $\Omega = [0,L] \times [0,L]$ in 2D. The results of these simulations are shown in Figure \ref{fig:momos}(b). We see that even small initial perturbations can lead to stable far-from-equilibrium patterns, despite the homogeneous state being linearly stable, though there is a nontrivial dependence on the size of the initial perturbation.}

\begin{figure}
    \centering
    \includegraphics[width=0.48\linewidth]{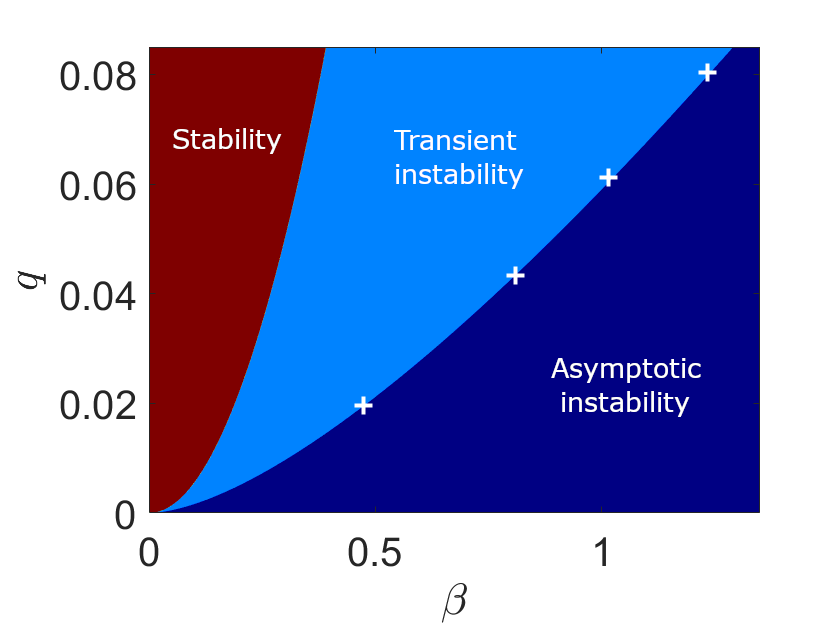}
    \includegraphics[width=0.48\linewidth]{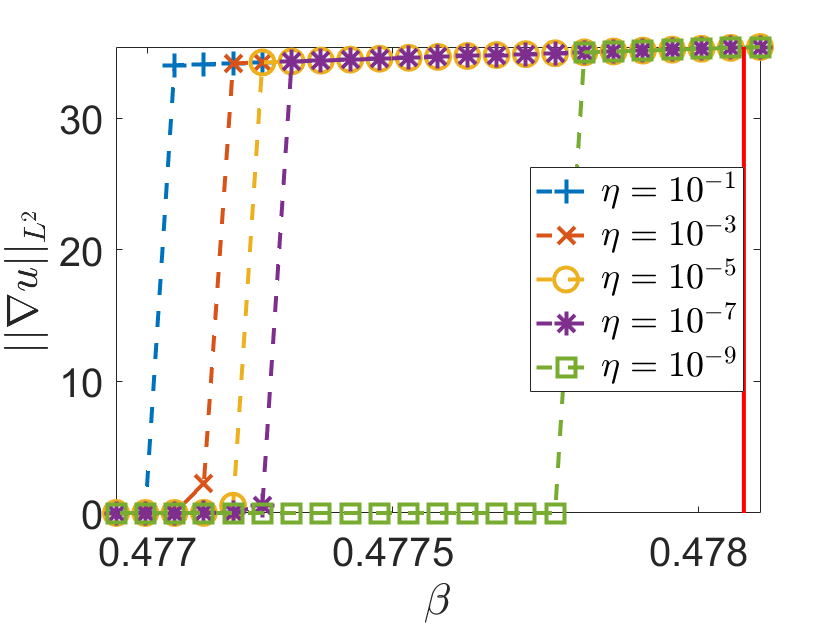}

            \hspace{0.4cm} (a) \hspace{5.7cm} (b) \hfill
            
    \caption{{(a)} Bifurcation diagram for the MOMOS model, \eqref{eq:model}, with parameters set to \( D_u = D_v = 0.6 \), \( k_1 = 0.4 \), \( k_2 = 0.6 \), and \( c = 0.8 \). Data points {(white crosses)} indicate parameter pairs where patterns of reactivity,  {arising due to non-normality}, are observed {at the parameters $(q, \beta) = (0.0196639, 0.474095), (0.0804361, 1.23535), (0.061122, 1.01668)$, and $(0.0433, 0.806)$. (b) A heterogeneity norm from systematic simulations across one of these data points for varying size of the initial data, $\eta$. The red vertical line indicates the Turing bifurcation point, where everything to the left of this line is in the Turing stable parameter region.}
}
    \label{fig:momos}
\end{figure}
\begin{figure}
     \centering    

\includegraphics[width=0.45\linewidth]{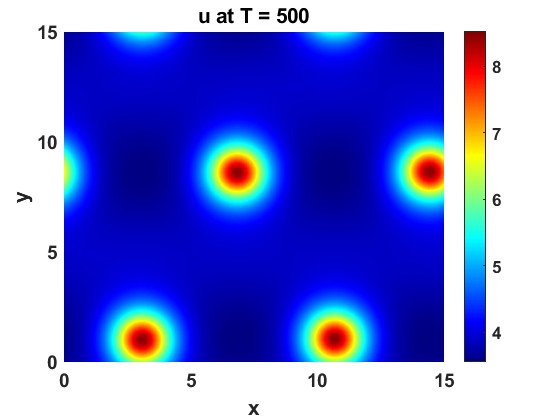}
\includegraphics[width=0.45\linewidth]{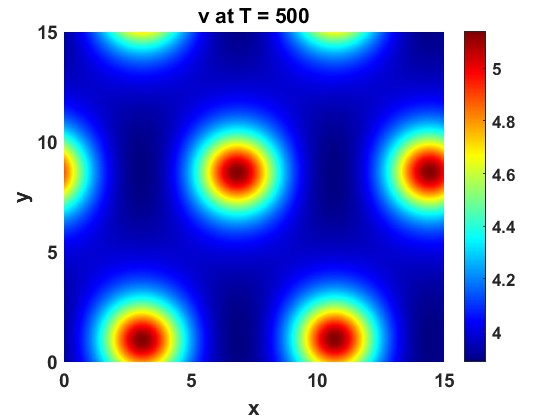}
   \caption{Patterns of reactivity {arising due to non-normality} in the MOMOS model, \eqref{eq:model}, for $q=0.0433$, $\beta=0.806$. The other parameters are set to \( D_u = D_v = 0.6 \), \( k_1 = 0.4 \), \( k_2 = 0.6 \), and \( c = 0.8 \).
}
    \label{fig:momos2}
\end{figure}

In Figure \ref{fig:momos2} {we give an example long-time simulation of a pattern} found for $q=0.0433$ and $\beta=0.806$, which lies in the region where $\jac$ is stable ($\beta < \beta_c \approx 0.8087$) and reactive according to the above Case 1  ($\beta > 2 D \sqrt{\frac{q}{c}} \approx 0.279$).  We used the implicit-symplectic IMSP\_IE scheme introduced in \cite{settanni2016devising, diele2019geometric} and exploited in \cite{noi} on a square domain of length $L=15$ discretized with spatial stepsize $h_x=0.2$ on a temporal interval $[0,500]$ discretized with $h_t=0.01$. 
The Matlab code used to generate patterns in Figures \ref{fig:momos2}, \ref{fig:momos3} can be found at \href{https://github.com/CnrIacBaGit/Patterns-of-Reactivity-MOMOS-}{this GitHub repository}\footnote{\url{https://github.com/CnrIacBaGit/Patterns-of-Reactivity-MOMOS-}}.

{We found a range of similar patterns existing all along the boundary between transient and asymptotic instability, i.e.~the boundary of the Turing space, using the above code. We also separately implemented a central finite-difference code using Matlab's \textsc{ode15s} function, which implements BDF15 for adaptive timestepping, in order to ensure that these solutions are found robustly in this region of the parameter space. We also implemented the model using the web-based solver, VisualPDE \cite{walker2023visualpde}, and this can be found at \href{https://github.com/AndrewLKrause/Transient-Instability-Patterning-Codes}{this GitHub repository}\footnote{\url{https://github.com/AndrewLKrause/Transient-Instability-Patterning-Codes}}, along with the finite-difference implementation of the model.}

{We remark that stable patterned states in the transiently unstable regime were only observed in two-dimensional models on sufficiently large domains; simulations in one spatial dimension only exhibited patterns in the Turing unstable regime. We explore this via numerical continuation after discussing the onset of instabilities in the transiently unstable regime.}

\subsection{Analysis of transient instability} In order to analyze the onset of patterns of reactivity due to non-normality for the {parameters} $q=0.0433$ and $\beta=0.806$, we evaluate the measure of non-normality, $\delta(k^2)$ given in (\ref{eq:non_normality}).

In Figure~\ref{fig:non-normality} on the left, we plot the polynomials $h$ and $\tilde h$ as function of $k^2$. As we are considering Case 1, $\tilde h(k^2)$ has a negative coefficient for the quadratic term, and the discriminant of $\tilde h(k^2)$ is $4.5302 > 0$. Hence, the wavenumbers that guarantee the reactivity of $\jac$ are $k^2 > \tilde k_m = 0.1173$. The measure of non-normality for values starting from $\tilde k_m$ is shown in Figure~\ref{fig:non-normality}, where we observe that $\delta(k^2)$ decreases as $k^2$ increases, thus confirming that $\jac$ is not normal and indicating an increase in non-normality with increasing $k^2$. 

 To evaluate the amplification envelope $\rho_{k}(t)$, we first observe that the discriminant in (\ref{eq:hmin}) satisfies $\Delta \approx -0.0053 < 0$. This implies that $h(k^2) > 0$ for all $k^2$, which, in turn, indicates that the eigenvalues $\lambda_{\pm}$ of $\jac$ are real and negative. In Figure \ref{fig:ampliI},
the time evolution of the amplification envelope $\rho_{k}(t)$ is shown alongside its theoretical estimate $\chi(t)$ in (\ref{eq:chi_real}) and its maximum value $\chi^*$ in (\ref{eq:max_chi_real}) for wavenumbers in the range $0.2 \leq k^2 \leq 1$. The plots demonstrate the close agreement between $\rho_{k}(t)$ and $\chi(t)$, validating $\chi(t)$ as a predictor of transient dynamics with $\chi^*$ as lower bound for the observed maximum of $\rho_{k}(t)$. The transient amplification becomes more sustained as $k^2$ increases, peaking at $k^2=1$ with an estimated $\chi^* = 1.7108$. The minimum of $h(k^2)$ ($k^2$ near $0.7812$) exhibits the largest negative eigenvalue of $\jac$ and the maximal return time (i.e. the duration for $\rho_{k}(t)$ to decay back to its initial value); see Figure \ref{fig:non-normality}. As $k^2$ increases further, $\rho_{k}(t)$ begins to decrease more rapidly, indicating faster stabilization of the system.

 In Figure \ref{fig:ampliII} the analysis is extended to a wider range of wavenumbers, specifically $10 \leq k^2 \leq 10^5$. For larger values of $k^2$, the return time continues to decrease significantly, reflecting the rapid stabilization of the system at high wavenumbers. The amplification dynamics become more pronounced, as evidenced by the higher peak values of $\rho_{k}(t)$. Despite the increased amplification, its maximum stabilizes at an estimated value of $\chi^* = 2.1242$ for large $k^2$.

 Figures \ref{fig:ampliI} and \ref{fig:ampliII} highlight distinct behaviors in the transient amplification and return times across different ranges of $k^2$. In Table \ref{tab:amplification_return_time}, we present the return time, which exhibits a significant increase as $k^2$ approaches the critical value of $0.7812$, corresponding to the largest negative eigenvalue of $\jac$ nearing zero. This behavior highlights a slower stabilization near this critical value. In contrast, for increasing values of $k^2$, the return time decreases sharply, indicating faster stabilization at larger wavenumbers{, as one would expect due to the asymptotics of the dispersion relation suppressing high-wavenumber dynamics}.
\begin{figure}
    \centering
    \includegraphics[width=0.45\linewidth]{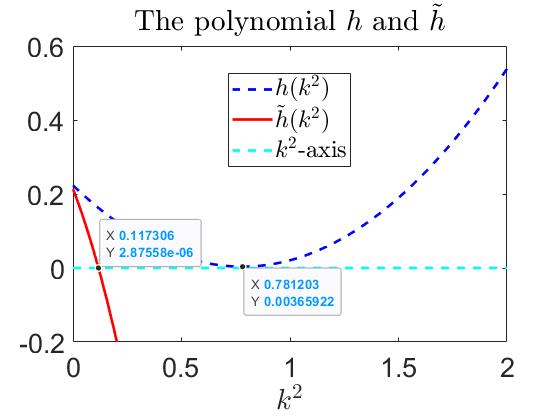}
      \includegraphics[width=0.45\linewidth]{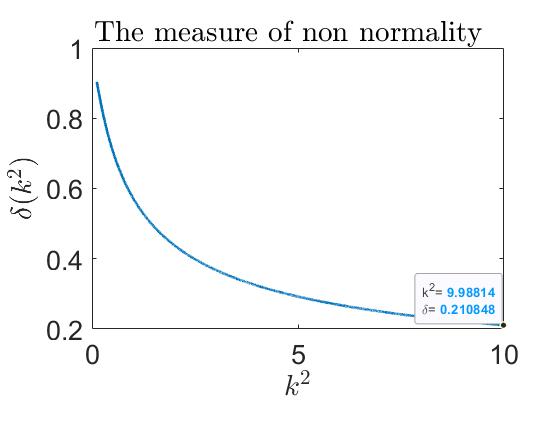}
    \caption{On the left, the polynomials $h(k^2)$ and $\tilde{h}(k^2)$ are plotted as functions of $k^2$. For Case 1, $\tilde{h}(k^2)$ has a negative quadratic coefficient with positive discriminant, resulting in reactivity of $\jac$ for $k^2 > \tilde{k}_m = 0.1173$. At $k^2 = 0.7812$, corresponding to the minimum of $h(k^2)$, the largest negative eigenvalue of $\jac$ approaches zero. On the right, the measure of non-normality $\delta(k^2)$ is shown for values of $k^2 \geq \tilde{k}_m$, demonstrating that $\delta(k^2)$ decreases as $k^2$ increases. This confirms that $\jac$ is non-normal, with non-normality becoming more pronounced at higher $k^2$.}
    \label{fig:non-normality}
\end{figure}

\begin{figure}
    \centering
    \includegraphics[width=0.45\linewidth]{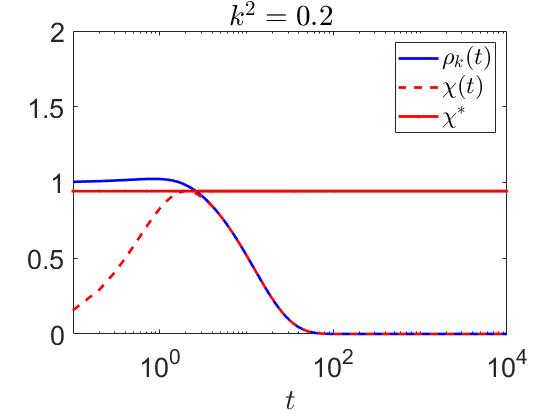}
      \includegraphics[width=0.45\linewidth]{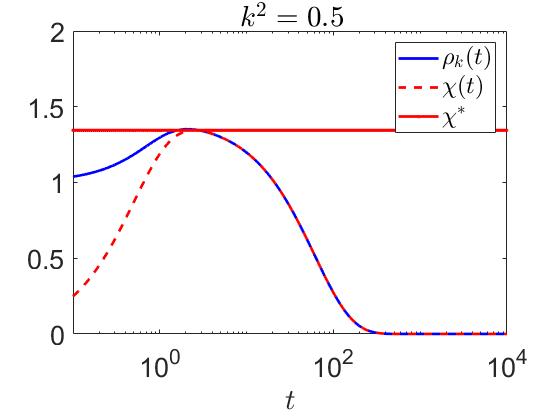}
        \includegraphics[width=0.45\linewidth]{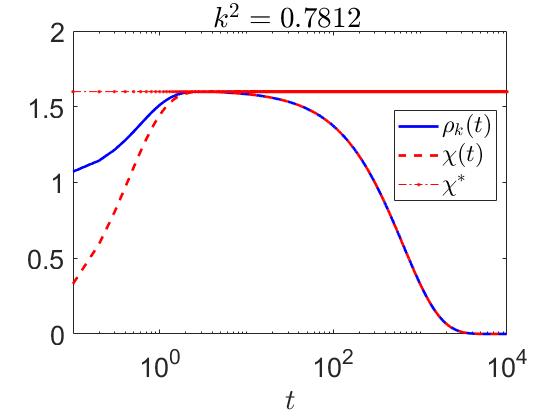}
          \includegraphics[width=0.45\linewidth]{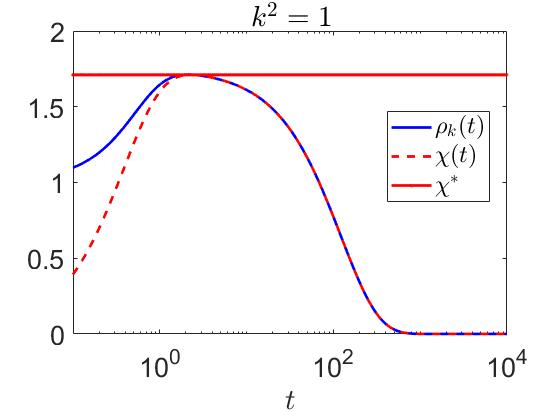}
    \caption{Time evolution of the amplification envelope $\rho_{k}(t)$, its theoretical estimate $\chi(t)$, and its maximum value $\chi^*$ for wavenumbers $0.2 \leq k^2 \leq 1$. The plots demonstrate a close agreement between $\rho_{k}(t)$ and $\chi(t)$, confirming the predictive capability of $\chi(t)$ for transient dynamics. The amplification becomes more sustained as $k^2$ increases, reaching an estimated maximum of $\chi^* = 1.7108$ at $k^2=1$. At $k^2 = 0.7812$, corresponding to the minimum of $h(k^2)$, the return time—defined as the time for $\rho_{k}(t)$ to decay back to its initial value—is maximized. Beyond this point, larger $k^2$ values result in a more rapid decline of $\rho_{k}(t)$, indicating faster stabilization of the system.
}
    \label{fig:ampliI}
\end{figure}
\begin{figure}
    \centering
    \includegraphics[width=0.45\linewidth]{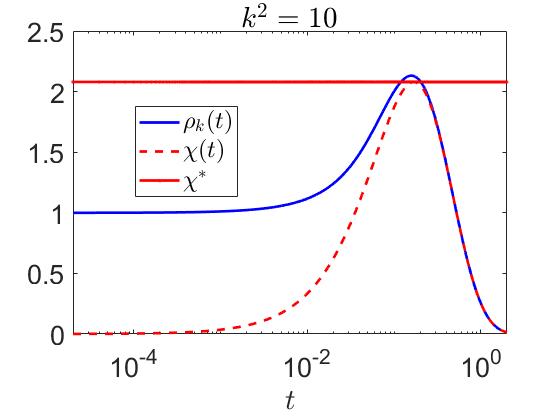}
      \includegraphics[width=0.45\linewidth]{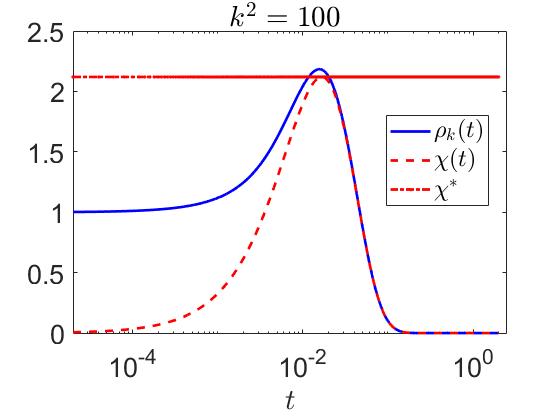}
        \includegraphics[width=0.45\linewidth]{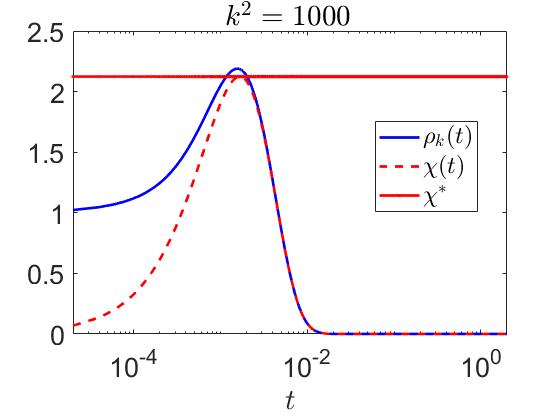}
          \includegraphics[width=0.45\linewidth]{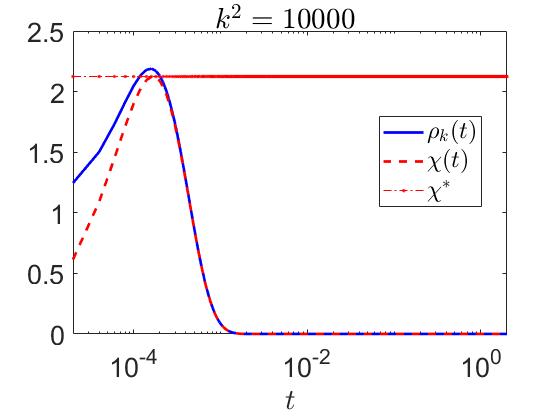}
    \caption{Time evolution of $\rho_{k}(t)$, $\chi(t)$, and $\chi^*$ for higher wavenumbers $10 \leq k^2 \leq 10^5$. As $k^2$ increases, the return time decreases significantly, showing the rapid stabilization of the system at larger wavenumbers. The transient amplification becomes more pronounced, with $\rho_{k}(t)$ achieving higher peak values. However, the maximum amplification stabilizes at $\chi^* = 2.1242$ for sufficiently large $k^2$, suggesting the existence of a maximum possible amplification value.}
    \label{fig:ampliII}
\end{figure}

\begin{table}[ht]
\centering
\caption{Values of \( \max(\rho_{k}(t)) \), the estimate \( \chi^* \) in (17), and the return time for the different values of \( k^2 \).}
\label{tab:amplification_return_time}
\vspace{0.2cm}
\begin{tabular}{|c|c|c|c|}
\hline
\( k^2 \) & \( \max(\rho_{k}(t)) \) & \( \chi^* \) & Return time \\ \hline
0.2      & 1.0239                     & 0.9423       & \( 0.85 \)   \\ \hline
0.5      & 1.3523                     & 1.3442       & \( 18.90 \)   \\ \hline
0.7812   & 1.6001                     & 1.5996       & \( 296.933 \)   \\ \hline
1        & 1.7130                     & 1.7108       & \( 66.44 \)   \\ \hline
10       & 2.1319                     & 2.0794       & \( 0.3669 \)   \\ \hline
$10^2$   & 2.1839                     & 2.1196       & \( 0.0312 \)   \\ \hline
$10^3$   & 2.1893                     & 2.1237       & \( 0.0311 \)   \\ \hline
$10^4$   & 2.1898                     & 2.1241       & \( 3.2\,10^{-4} \)   \\ \hline
$10^5$   & 2.1899                     & 2.1242       & \( 4\,10^{-5} \)   \\ \hline
\end{tabular}
\end{table}

 Conversely, the maximum amplification $\max_t(\rho_{k}(t))$ grows with increasing $k^2$, stabilizing at approximately $2.1899$. The estimate $\chi^*$ provides a reliable lower bound for the maximum amplification, stabilizing at $2.1242$, with a difference of $0.0657$ compared to the maximum amplification. In this context, we fix $k^2 = 0.7812$ and compare the lower bound $\chi^*$ with the Kreiss constant $\mathcal{K}(\jac)$, estimated by exploiting the definition in (\ref{kreiss}). We used \textsc{EigTool} \cite{wright2002eigtool} to evaluate the $\epsilon$-pseudospectral abscissa for several values of $\epsilon$. In Table \ref{tab:updated_values} we estimate the ratio $\alpha_\epsilon / \epsilon$ and found that its maximum is attained at approximately $\mathcal{K}(\jac) = 1.615668$. Comparing this with the value provided by $\chi^*$, we observe that $\chi^*$ gives a lower bound that is much closer to the maximum amplification.

\begin{table}[ht]
\centering
\begin{tabular}{|c|c|c|}
 \hline
$\epsilon$ & $\alpha_\epsilon$ & $\alpha_\epsilon/\epsilon$ \\  \hline
0.01 & 0.01443 & 1.4433 \\  \hline
0.04 & 0.06171 & 1.5429 \\  \hline
0.05 & 0.07723 & 1.54477 \\  \hline
0.051 & 0.07878& {1.54479} \\  \hline
0.052 & 0.08032  & 1.54478 \\  \hline
0.06 & 0.09264 & 1.5441 \\  \hline
0.1  & 0.15322  & 1.5322 \\
 \hline
\end{tabular}
\caption{Values of $\epsilon$, pseudo-abscissa $\alpha_\epsilon$, and their ratio $\alpha_\epsilon / \epsilon$ for $k^2 = 0.7812$. The maximum ratio,  highlighted in bold, corresponds to the estimate of the Kreiss constant.}
\label{tab:updated_values}
\end{table}

\subsection{Detecting stable reactivity patterns}

Throughout the region of reactivity, we can expect the emergence of transient patterns. However, from the previous analysis, we have understood that two key ingredients will be necessary for their emergence: sufficient amplification and a sufficiently long return time. {Importantly, this transient analysis is necessary but not sufficient for patterns to remain indefinitely, as this will require multistability of homogeneous and patterned states, which we explore in the next section.}

 Building on the {first} two pillars, here we aim to identify a threshold for the maximum amplification and return time that enables the emergence of stable reactive patterns. To achieve this, for each pair \((q, \beta)\) in the bifurcation diagram, we selected \(k^2\) and estimated the maximum amplification using the lower bound \(\chi^*\). Specifically, \(k^2\) is chosen within the range of reactivity at the point where the polynomial \(h(k^2)\), which is positive for all \(k^2 > 0\) as \(\jac\) is asymptotically stable, is closest to zero. This corresponds to the eigenvalue of \(\jac\) with the largest (negative) real part, which is directly associated with the longest return time. 

{In order to} find the wavenumber $k^2$, within the stability and reactivity region of the matrix $\jac$, that minimizes the distance between the parabola $h(k^2)$ and the $k^2$-axis, let us define
\begin{equation}
    k_{min} :=  \frac{D_u g_v + D_v f_u + \beta g_u \ell(u_0)}{2 D_u D_v}
\end{equation}
that corresponds to the vertex of the parabola $h(k^2)$. Then, as in Section~\ref{sec:react_in}, starting from the conditions for reactivity, it is possible to determine $k^2$ as indicated in Table \ref{tab:k2_cases_clean}.

\begin{table}[h!]
\centering
\small
\renewcommand{\arraystretch}{1.4}
\begin{tabular}{@{} c >{\raggedright\arraybackslash}p{4cm} >{\raggedright\arraybackslash}p{7cm} @{}}
\toprule
\textbf{Case} & \textbf{Conditions} & \textbf{Expression for \( k^2 \)} \\
\midrule

\textbf{1} & \( 0 \leq D_u D_v < \dfrac{\beta^2 \ell^2(u_0)}{4} \) & 
(i) \( \tilde{\Delta} < 0 \): \( k^2 = \max\{0, k_{\min}\} \)\\[0.5em]

& & (ii) \( \tilde{\Delta} > 0 \), \( \tilde{k}_p, \tilde{k}_m > 0 \):\\
& & \hspace*{1em}If \( \tilde{k}_p \leq k_{\min} \leq \tilde{k}_m \):\\
& & \hspace*{2em} \( 
k^2 = 
\begin{cases}
\tilde{k}_p, & \text{if } |k_{\min} - \tilde{k}_p| < |k_{\min} - \tilde{k}_m| \\
\tilde{k}_m, & \text{otherwise}
\end{cases}
\)\\[0.5em]

& & \hspace*{1em}If \( k_{\min} \notin (\tilde{k}_p, \tilde{k}_m) \): \( k^2 = \max\{0, k_{\min}\} \)\\[0.5em]

& & (iii) \( \tilde{k}_p < 0 \), \( \tilde{k}_m > 0 \): \( k^2 = \max\{k_{\min}, \tilde{k}_m\} \)\\[0.5em]

& & (iv) \( \tilde{k}_p, \tilde{k}_m < 0 \): \( k^2 = \max\{0, k_{\min}\} \) \\
\midrule

\textbf{2} & 
\(
D_u D_v \geq \dfrac{\beta^2 \ell^2(u_0)}{4} \geq 0
\)\\
& \( J_{0} \) reactive & 
\( k^2 = \max\left\{0, \min\left\{k_{\min}, \tilde{k}_p\right\}\right\} \) \\
\midrule

\textbf{3} & 
\(
D_u D_v \geq \dfrac{\beta^2 \ell^2(u_0)}{4} > 0
\)\\
& \( J_{0} \) not reactive & 
\( k^2 = \min\left\{ \max\left\{\tilde{k}_m, \tilde{k}_p\right\}, \max\left\{k_{\min}, \tilde{k}_m\right\} \right\} \) \\
\bottomrule
\end{tabular}
\caption{Summary of cases and sub-cases for computing \( k^2 \) based on reactivity and discriminant conditions.}
\label{tab:k2_cases_clean}
\end{table}

 \begin{figure}
     \centering
    \includegraphics[width=0.45\linewidth]{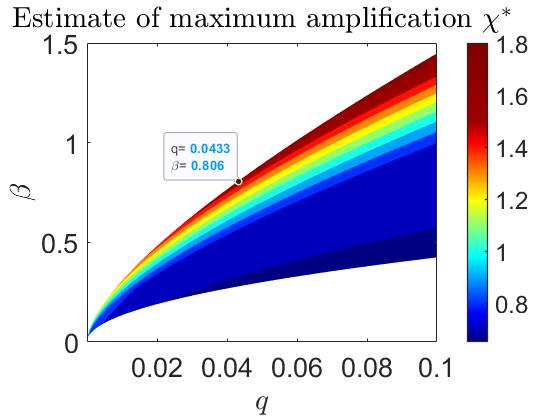}
    \includegraphics[width=0.45\linewidth]{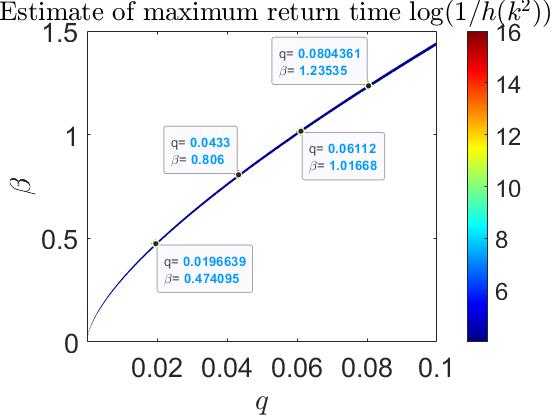}
     \caption{Estimate of the maximum amplification \( \chi^* \) as a function of the parameters \( q \) and \( \beta \). The color scale represents the magnitude of \( \chi^* \), with warmer colors (red) indicating higher amplification values. Amplification \( \chi^* \) increases with both \( q \) and \( \beta \), reaching its maximum in the upper-right region of the plot. The red region, where \( \chi^* > 1.5 \), corresponds to the parameter space where stable reactive patterns are found.  On the right: Values of \( \log\left(1/h(k^2)\right) \) for \( h(k^2) < 0.01835 \) are shown. As \( h(k^2) \) approaches zero, the largest eigenvalues of the Jacobian matrix \( \jac \) approach zero, leading to infinite return times. The threshold \( h(k^2) = 0.01835 \) (equivalent to \( \log\left(1/h(k^2)\right) = 4 \)) identifies the region in the parameter space where additional stable patterns are found.
} 

    \label{fig:maximum_amplification}
 \end{figure}

\begin{figure}
     \centering    
\includegraphics[width=0.45\linewidth]{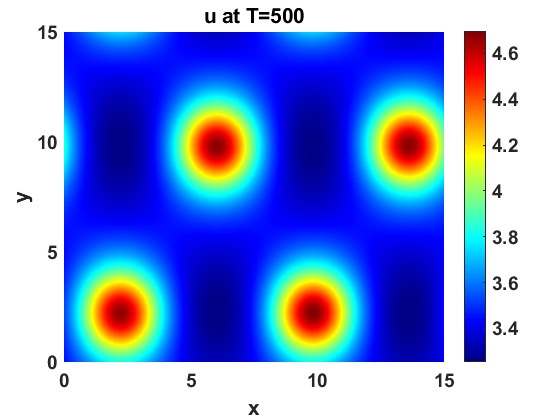}
\includegraphics[width=0.45\linewidth]{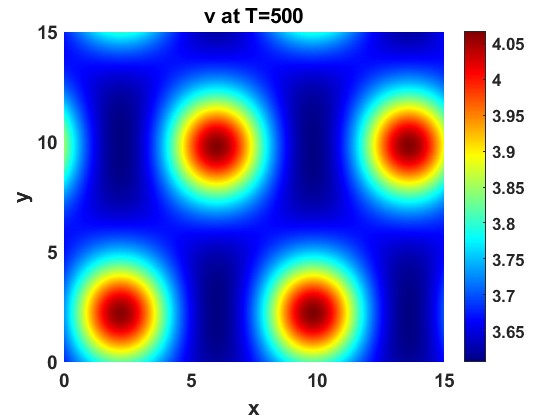}
\includegraphics[width=0.45\linewidth]{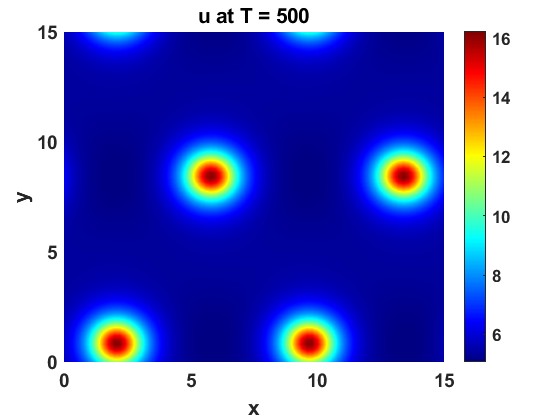}
\includegraphics[width=0.45\linewidth]{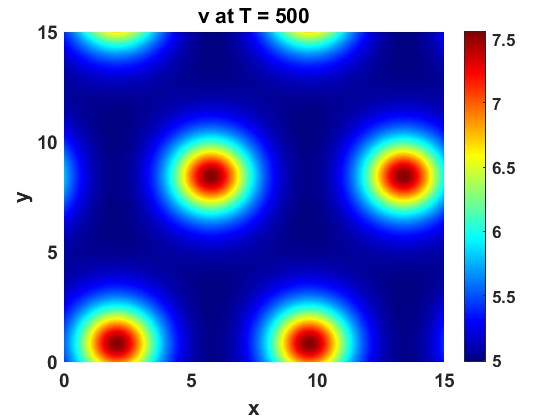}
\includegraphics[width=0.45\linewidth]{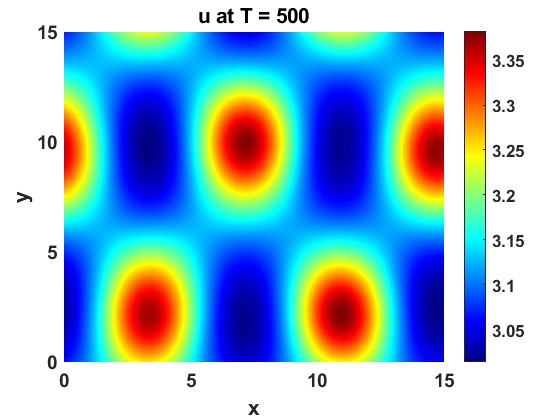}
\includegraphics[width=0.45\linewidth]{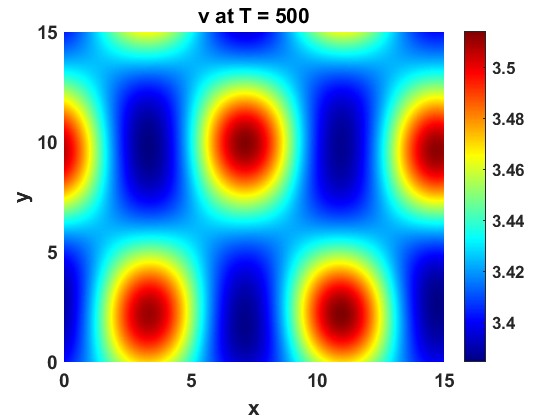} \caption{Pattern of non-normality in the MOMOS model. First row: $q=0.061122$, $\beta=1.01668$. Second row:  $q=0.0196639$, $\beta=0.474095$. Third row:   $q=0.0804361$, $\beta=1.23535$. The other parameters are set to \( D_u = D_v = 0.6 \), \( k_1 = 0.4 \), \( k_2 = 0.6 \), and \( c = 0.8 \).
}
    \label{fig:momos3}
\end{figure}

For these values of \(k^2\), we estimate \(\chi^*\) and visualize the results in Figure~\ref{fig:maximum_amplification}, which shows a heatmap of \(\chi^*\) as a function of the parameters \(q\) and \(\beta\). Amplification \(\chi^*\) increases with both \(q\) and \(\beta\). The points \((q, \beta)\) where we initially observed patterns correspond to values of \(\chi^*\) greater than \(1.5\). We, therefore, focused on identifying patterns of reactivity in the region where amplification values exceed \(1.5\), while staying close to the curve of instability to extend the return time.

 To estimate the return time for the parameter pairs associated with the identified stable patterns, we plot the values of \(\log\left(1/h(k^2)\right)\) as a proxy for the return time. As \(h(k^2)\) approaches zero, the largest eigenvalues of the Jacobian matrix \(\jac\) approach zero, leading to infinite return times. We have identified the threshold \(h(k^2) = 0.01835\) (equivalent to $\log\left(1/h(k^2)\right) = 4$) as the region in the parameter space where the sufficiently long return time allows nonlinear dynamics to intervene, driving the solution toward the basins of attraction of stable non-homogeneous patterns{, although the boundaries between such basins goes far beyond the analysis here}. As it can be seen in Figure~\ref{fig:maximum_amplification} this region is a subset of parameters with maximum amplification $\chi^*>1.5$. Stable reactive patterns have been identified throughout the region $\log\left(1/h(k^2)\right)> 4$ and illustrated in Figure \ref{fig:momos3}.

 In summary, this analysis gives us a broad view of how perturbations will grow, and how long they will persist, as a function of their spatial frequency. As noted, this alone is insufficient for the existence of pattern formation outside of the Turing unstable regime, which requires a stable heterogeneous pattern to be simultaneously stable with the homogeneous equilibrium $P_0$. We next turn to a bifurcation analysis to provide an explanation of where such multistable regions may exist, which coincides with the long-time solutions shown in Figures \ref{fig:momos2} and \ref{fig:momos3}.

\section{Bifurcation analysis}\label{Sec_continuation}

    {Here we will use numerical continuation via pde2path \cite{uecker2014pde2path} to understand the apparent multistability seen in the MOMOS model in the transiently unstable regime (i.e.~the origin of the stable patterned solutions found despite not being in the classical Turing space of asymptotic instability). Motivated from direct numerical explorations, we consider the upper boundary in Figure \ref{fig:momos} between the asymptotic instability regime, and the transient instability regime, which is given by equality of $\beta=\beta_c$ in \eqref{bifurcation_curve}. We will consider 1D and 2D versions of the model to demonstrate the impact of dimensionality on this emergent bistability. We will again make use of the heterogeneity functional given in \eqref{heterogeneity_functional}.

\begin{figure}
        \centering
        \includegraphics[width=0.48\linewidth]{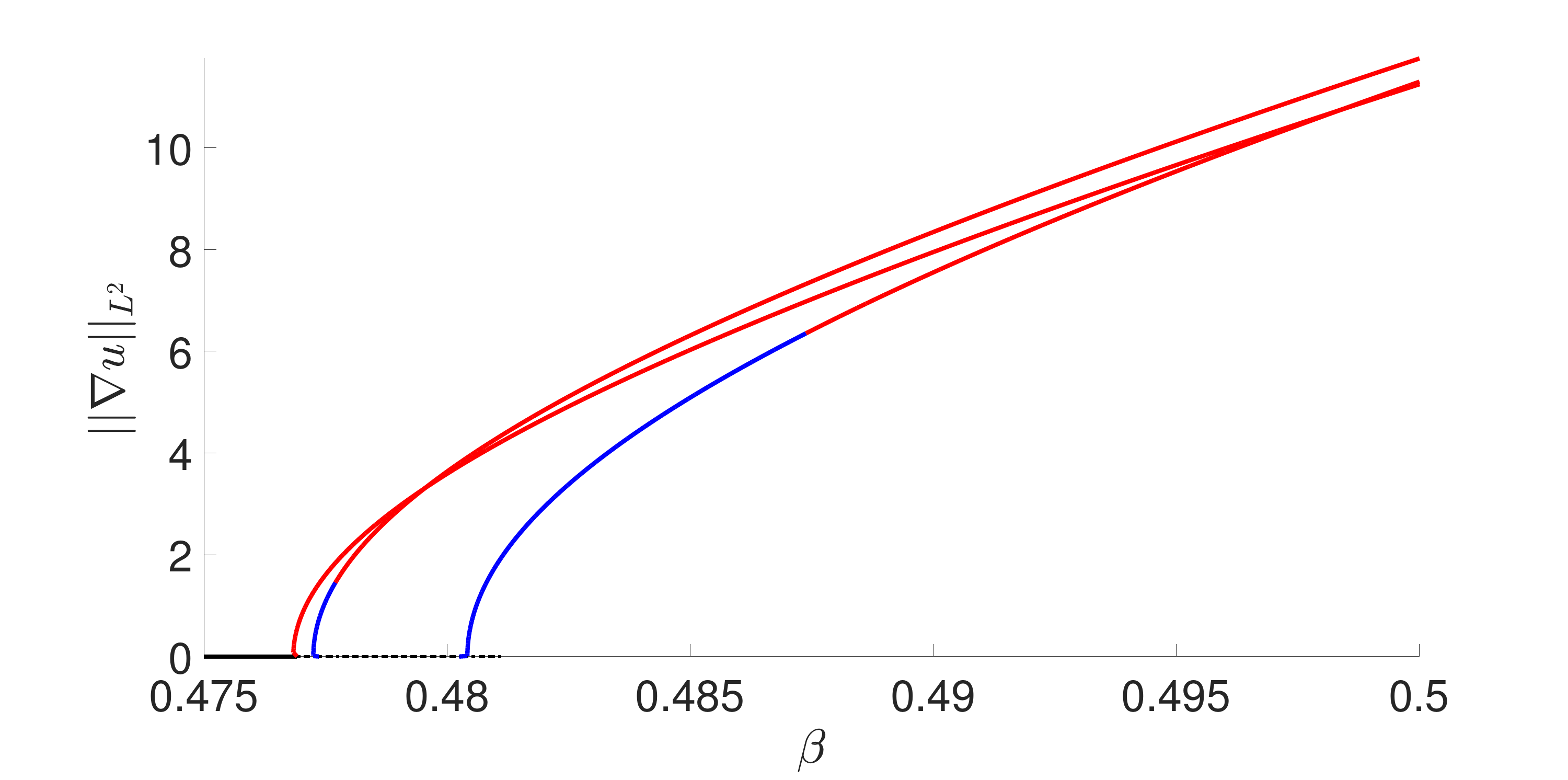}
        \includegraphics[width=0.48\linewidth]{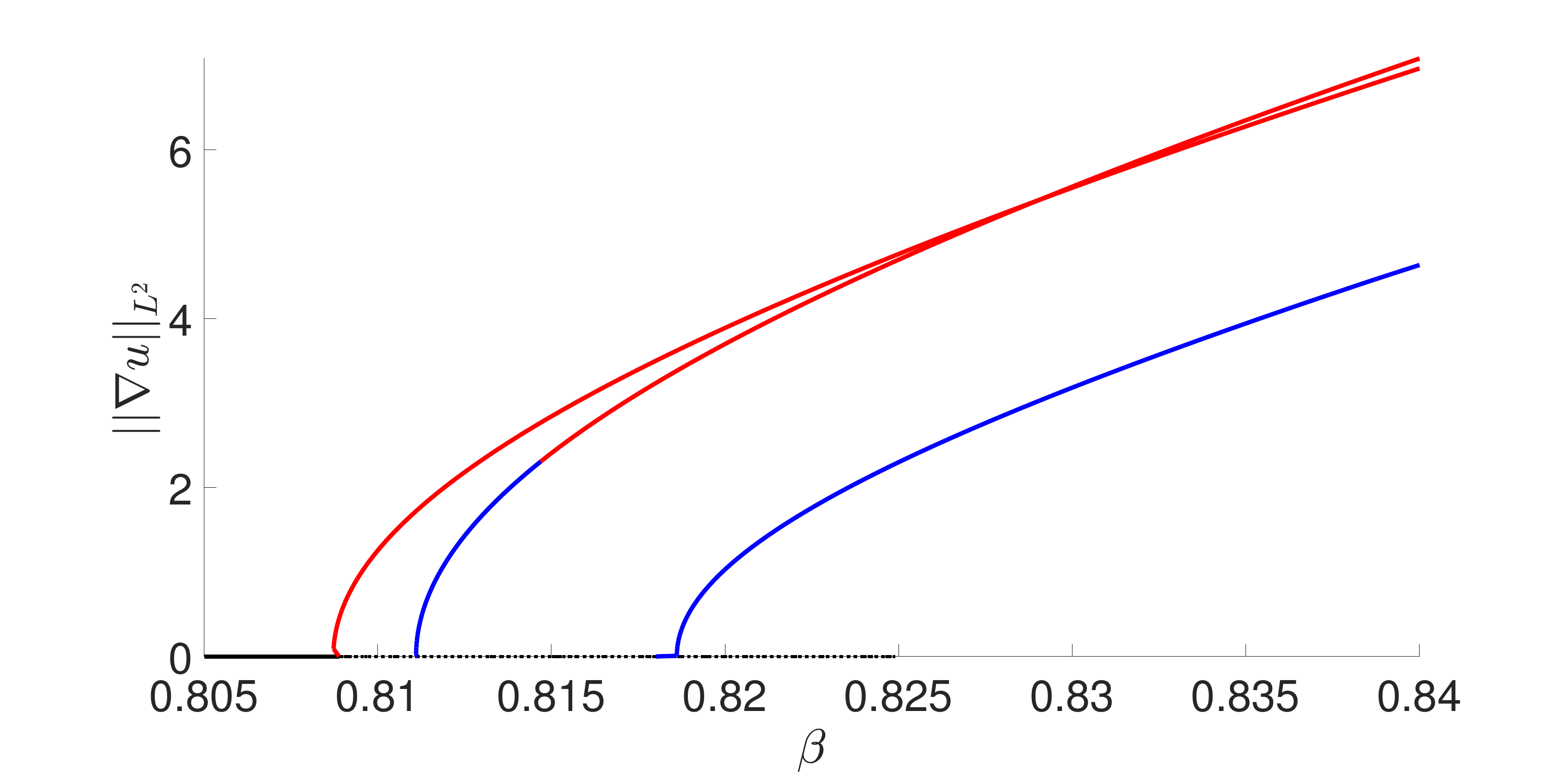}
        
        \hspace{0.4cm} (a) \hspace{5.7cm} (b) \hfill
        
        \caption{{Numerical continuation of steady states of the model \eqref{eq:model} via pde2path, with red curves corresponding to stable solutions, and blue curves corresponding to unstable solutions. We set (a) $q = 0.0196639$ and (b) $q = 0.0433$ and used an interval of length $L = 60$ on a mesh with 2,000 points per side.}}
        \label{fig:1D_continuation}
    \end{figure}

    {We fix all parameters as in Figure \ref{fig:momos}, and vary only $q$ and $\beta$. We remark that the results reported here are for the case of Neumann boundary conditions, although similar structures exist in the periodic case. {We used an extension of the weakly nonlinear analysis code developed by \cite{villar2023computation} for the model in one spatial dimension. We expand $u$ and $v$ in terms of the linearly unstable eigenfunction near the bifurcation curve. Writing $u = u_0 + \varepsilon A(t)\cos(k_c x)$ (and similarly for $v$) where $k_c$ is the critical wavenumber and $\varepsilon^2 \propto \beta-\beta_c$ is the distance from the bifurcation point, then $A(t)$ is the perturbation amplitude which asymptotically satisfies the Stuart-Landau equation:
    \begin{equation}
        \frac{d A}{dt} =  c_1A +c_3A^3 + c_5A^5. 
    \end{equation}
    The cubic coefficient, $c_3$, can be written purely in terms of $q$ along the bifurcation curve (i.e.~by writing $\beta_c$ as a function of $q$ in \eqref{bifurcation_curve}), and is negative for all $q \geq 0$, though the expression is rather long and tedious to sign. We confirm this via numerical continuation in Figure \ref{fig:1D_continuation}, where the supercritical structure of the bifurcation is clear. There are many other branches not shown here, but there was no evidence found that any of the primary or secondary branches folded backwards to give rise to a bistable region. {One can also compare the direct numerical simulations shown in Figure \ref{fig:momos}(b) to these steady state branches to observe how transient instabilities can exploit bistability by tracing out the stable branches.} Direct simulations of the 1D model did not provide any evidence of bistability in the parameter space considered, {consistent with the evidence that the bifurcation is supercritical and no further mechanism exists for generating bistability.}}

    \begin{figure}
        \centering
        \includegraphics[width=0.8\linewidth]{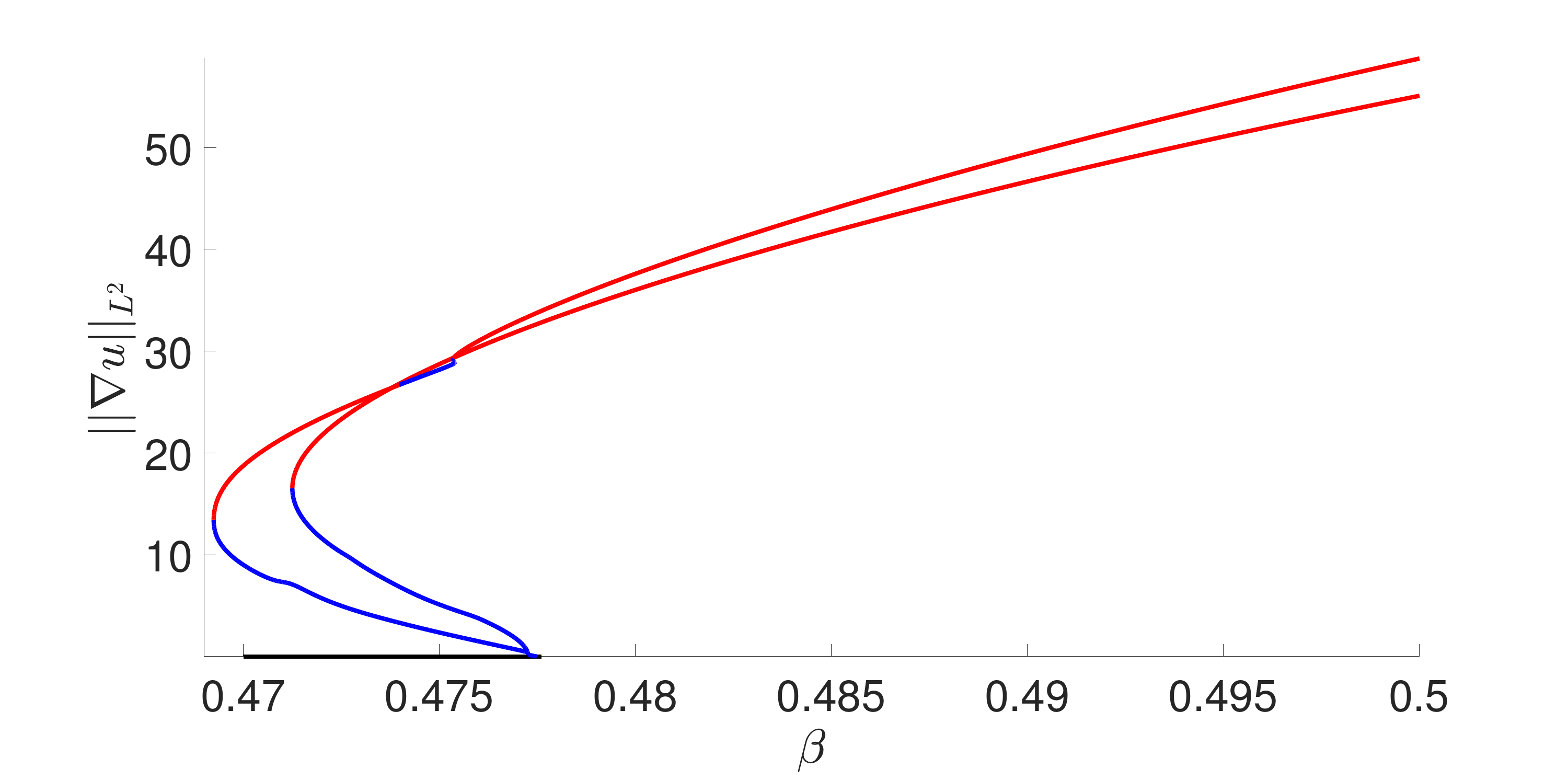}
        \caption{{Numerical continuation of steady states of the model \eqref{eq:model} via pde2path, with red curves corresponding to stable solutions, and blue curves corresponding to unstable solutions. Here $q = 0.0196639$ on a square domain of side length $L = 15$ on a mesh with 90 points per side.}}
        \label{fig:first_continuation}
    \end{figure}
    \begin{figure}
        \centering
        \includegraphics[width=0.48\linewidth]{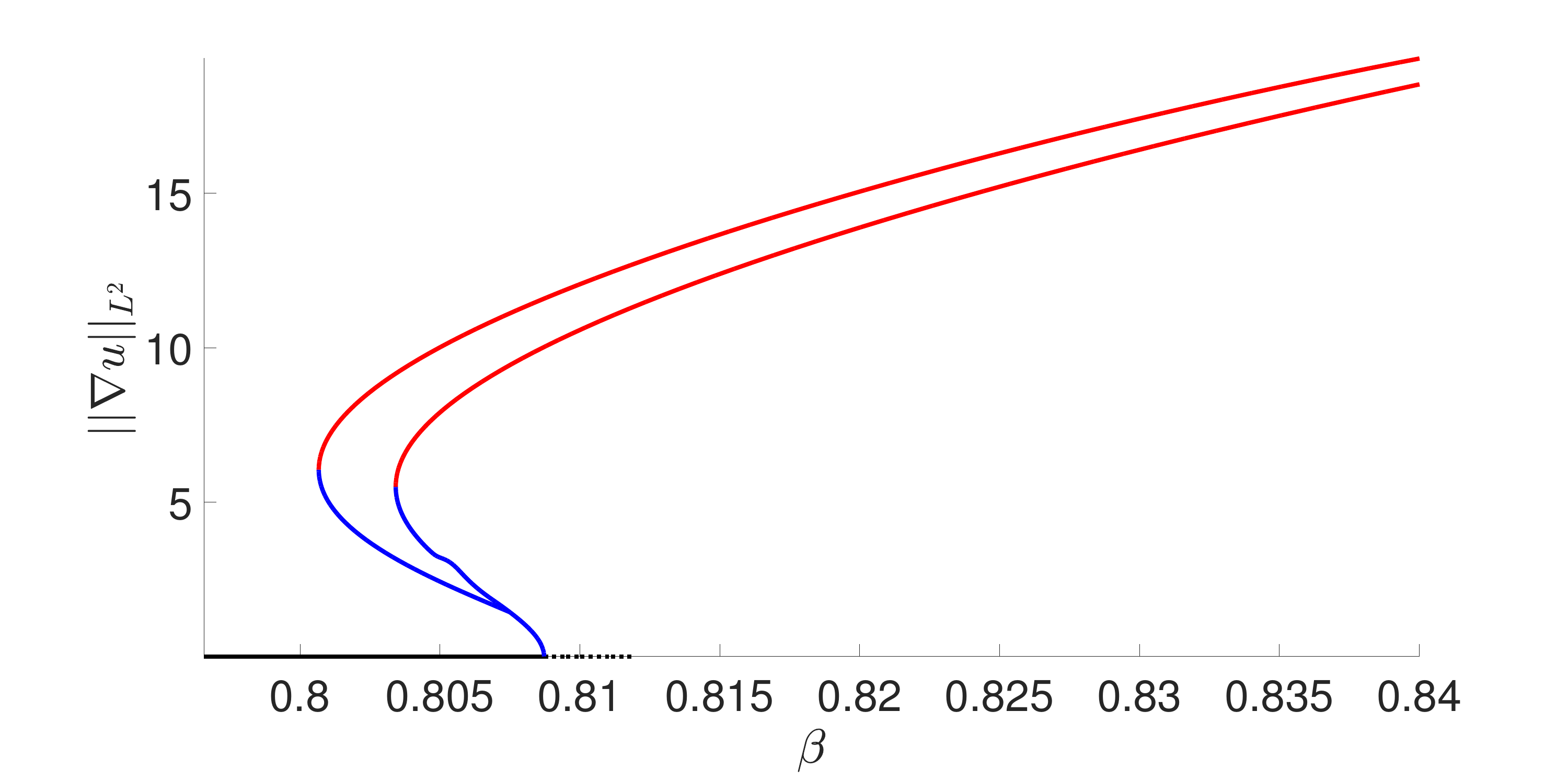}
        \includegraphics[width=0.48\linewidth]{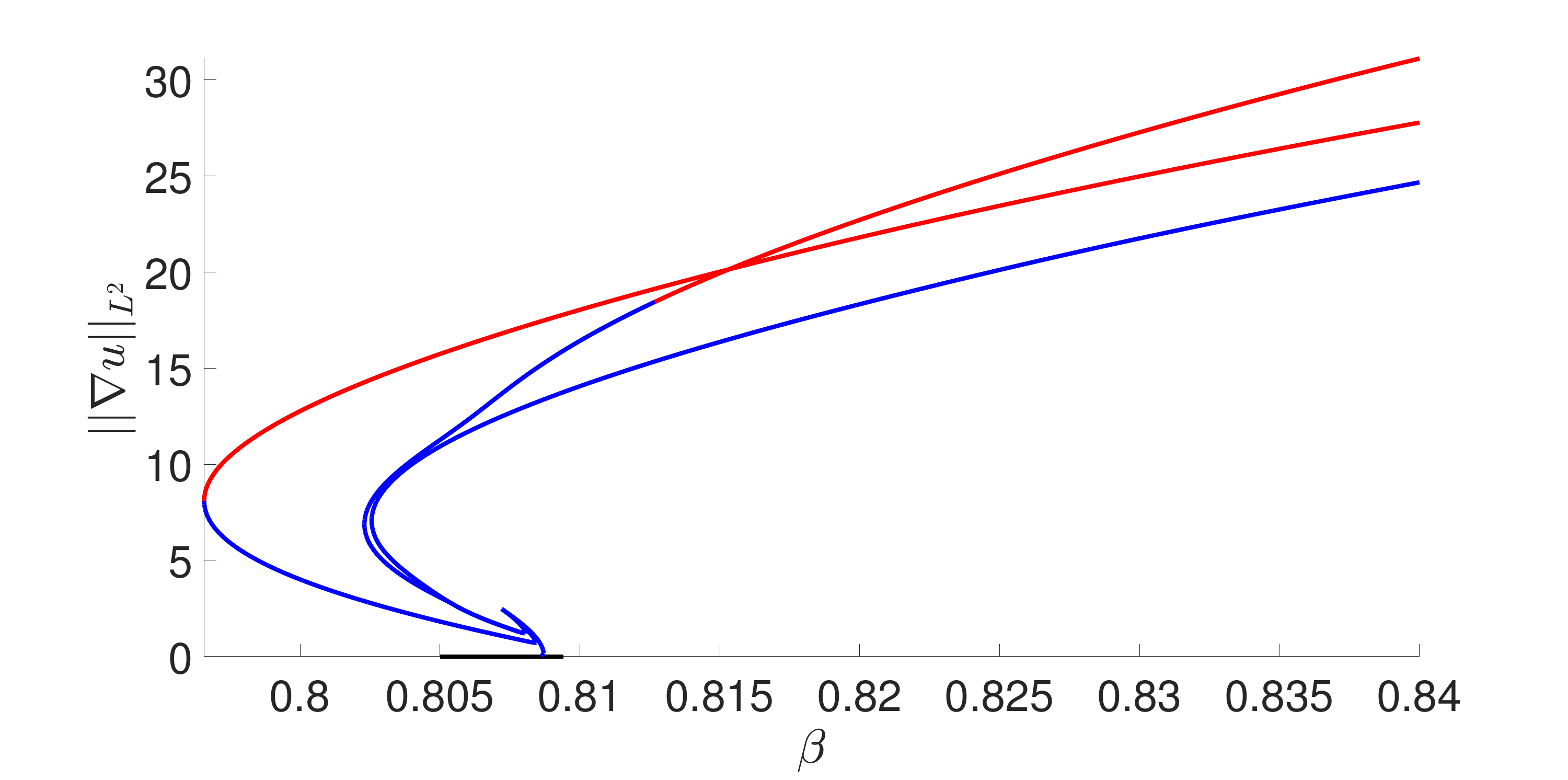}
        
        \hspace{0.4cm} (a) \hspace{5.7cm} (b) \hfill
        
        \caption{{Numerical continuation of steady states of \eqref{eq:model} as in Figure \ref{fig:first_continuation} but with $q = 0.0433$ and (a) $L = 10$, (b) $L=15$.}}
        \label{fig:q0.0433_L10}
    \end{figure}
    
    We next consider the 2D situation via continuation in $\beta$ of equilibrium solutions in Figures \ref{fig:first_continuation} and \ref{fig:q0.0433_L10} for two values of $q$. Due to the 2D geometry and domain length, there are \emph{many} other branches we do not show, and instead we focus on the primary branches emerging from the Turing instability. For these two values of $q$, we see a clear subcritical Turing bifurcation which gives rise to unstable branches that subsequently undergo a fold, leading to a bistable parameter region for smaller values of $\beta$. The size of this bistability region is geometry dependent, as we show in Figure \ref{fig:q0.0433_L10} by comparing domains of different lengths on the same axes. As the domain size is increased further, the location of the fold point (and hence the boundary of the bistable region) does not change appreciably from $L=15$, suggesting that sufficiently large domains will have comparably large bistable regions.
    
    Direct simulations with slowly varying parameters used to quasi-statically continue along stable branches (not shown) also demonstrated a range of different stable structures within these bistable parameter regimes, especially on sufficiently large domains. The key insight here is that the geometry needs to be at least two dimensional, and sufficiently large, to ensure the existence of a bistable region capable of attracting transiently unstable perturbations from the stable homogeneous steady state within the transiently unstable parameter space.}


\section{Discussion, conclusions and future work}\label{Sec_Discussion}

Motivated by experimental observations by \cite{vogel2014submicron}, {as well as the general importance of microbial aggregation and processing of soil organic carbon \cite{zhang2023soil}}, we have analyzed a simplified two-component version of the MOMOS model of bacterial soil carbon dynamics introduced by \cite{hammoudi2018mathematical}. We focused on the chemotaxis-driven pattern formation of this model, and particularly the role of transient instabilities distinct from classical Turing-type routes to pattern formation, {with the goal of characterizing when models might predict microbial aggregation phenomena, i.e.~pattern formation}. 

We demonstrated that the linearized system is non-normal, with initial perturbation amplification driven by its reactivity, reaching a maximum before decaying over the return time. We first revisited the conditions for asymptotic instability and then outlined the conditions under which a general linearized chemotaxis system becomes reactive. These conditions highlight three key scenarios that depend on the balance between chemotaxis and diffusion. Reactivity can emerge when chemotaxis, represented by $|\beta|$, is sufficiently strong to overcome the stabilizing effects of diffusion ($D_u$ and $D_v$). For moderate chemotaxis, reactivity is instead driven by sufficiently large local gradients. In cases where chemotaxis and local gradients are both weak, reactivity arises only through a finely tuned interaction between these parameters and diffusion, governed by additional constraints.

When applied to the MOMOS model, {chemotactic sensitivity ($\beta$), soil organic carbon input ($c$), and the metabolic quotient ($q$) were explicitly given in the conditions for reactivity, suggesting that carbon-relevant parameters could induce transient instabilities and hence plausibly lead to aggregation}. We identified regions of the parameter space $(q, \beta)$ where the system remains both stable and reactive. Within this region, we analyzed the transient dynamics using Klika's indicator for maximum amplification \cite{klika2017significance}, which proved to be a more effective lower bound compared to the Kreiss constant. This approach allowed us to characterize the transient amplification envelope, linking it to the emergence of reactive patterns in regions of asymptotic stability of the homogeneous equilibrium.

A key finding of this analysis was the importance of return time relative to maximum amplification. While  \cite{klika2017significance} concluded that a significant transient growth outside of the asymptotic instability conditions requires $0 < |\lambda_+({k})| < |\lambda_-({k})| \ll 1$, we observed that maximum amplification increases with wavenumber magnitude, and can correspond to values where $|\lambda_-({k})| > 1$. Instead, when the largest (negative) eigenvalue approaches zero (and hence $0 < |\lambda_+({k})| < |\lambda_-({k})| \ll 1$), the return time is maximized. While high maximum amplification can enhance transient deviations, short return times inhibit the system from escaping the basin of attraction of the homogeneous equilibrium. Conversely, even modest maximum amplification can lead to stable pattern formation if the return time is sufficiently long.

By exploiting the determinant of the linearized Jacobian $h(k^2)$ to construct a proxy for return time, we identified critical regions near the instability boundary where sufficiently long return times allow nonlinear dynamics to intervene, facilitating the emergence of stable reactive patterns. Our results further highlighted the role of the chemotaxis effect and nonlinearity governed by the parameter $q$ in driving spatial heterogeneity in soil carbon models. Specifically, while the chemotactic term $\beta$ enhances transient and asymptotic instability, the parameter $q$ has the opposite effect of stabilizing the dynamics. 

{Our study also exemplifies some key differences between chemotaxis models and reaction-diffusion systems. As is well-known, Equation (\ref{eq:suff_conditions_cd}) implies that strong chemotactic aggregation alone (given by large $\beta$) is enough to induce Turing instability somewhat independently of the rest of the details of the system. This is in contrast to reaction-diffusion models, where Turing instability requires rather constraining conditions on the parameters (e.g.~Jacobian matrices with particular sign structures).  Along these lines, Case I of our conditions for reactivity highlights an analogous effect; the size of the parameter space region where reactivity is possible is directly proportional to the chemotactic strength $\beta$, rather than requiring a more fine-tuned balance of the system parameters. We expect that an analogous generalization holds for more general cross-diffusion systems, as these often have many distinct ways of inducing pattern-forming instabilities \cite{villar2024designing}). Additionally, the chemotaxis-driven instability here does not require any reactivity of the spatially homogeneous Jacobian, which was suggested by \cite{NEUBERT2002} to be a rather general principle underlying wide classes of systems exhibiting pattern-forming instabilities.}

{For the specific model and parameters investigated here, the existence of patterns of reactivity only occurred near the boundary of a subcritical bifurcation, where the well-known bistable region gave rise to their existence. Compared with the results of \cite{klika2017significance}, while the transient instability region in the case of chemotaxis can be much larger than the reaction-diffusion examples studied there, the existence of stable patterns is constrained to this bistable region, and hence to the boundary of the classical Turing space.  There are many systems where pattern formation can occur in the absence of a Turing bifurcation (e.g.~\cite{brauns2020phase, al2022stationary}), and hence where one could expect stable patterned states to exist and be reachable from a transient instability of a reactive equilibrium, but we leave such a conjecture for future work.}

{It would also be interesting to explore whether patterns of reactivity are likely in other models, near subcritical Turing bifurcations or otherwise. \cite{krause2024turing} suggested that multistability may be common in large biological systems such as gene regulatory networks underlying vertebrate pattern formation, and such multistability can lead to the loss of pattern formation after a Turing instability. In principle, transient spatial instability could have the opposite effect, leading to pattern formation without any Turing instability. In general, many analyses of reaction-(cross)-diffusion models neglect the possibility of transient instabilities and multistability of patterned states, solely focusing on understanding asymptotic stability through, e.g., classical Turing space pictures. Such classical tools are both conceptually simpler, and easier for large-scale systematic analyses (e.g.~see \cite{marcon2016high} for an example). Our work instead provides another example where these effects can matter, hence suggesting the value in more nuanced understandings of  dynamics in these systems.}

From an applied perspective, transient patterns arising from reactivity may have significant implications for understanding hot-spot formation in soil ecosystems, as experimentally observed in \cite{vogel2014submicron}. These patterns could influence microbial activity, carbon and nitrogen sequestration, and soil health by creating localized zones of intense organic matter processing. {The role of microbial aggregation, and its interaction with soil constituents such as carbon and nitrogen, may play important roles in the overall heterogeneous structure of soil and its impact on soil organic carbon accumulation \cite{zhang2023soil}. Standard analyses of such processes focus on systematic linear stability, but our results suggest that more nuanced dynamics, such as transient amplification, can modify the parameter space under which spontaneous microbial aggregation is observed. In the context of carbon sequestration, one would ideally want to increase both $c$ and $q$ to maximize the amount of accumulated carbon, albeit these have opposing effects on both pattern formation given in \eqref{bifurcation_curve} and reactivity conditions given by, e.g., Case 1 above.}

Future studies should aim to integrate the dynamics of chemotaxis-driven patterns with experimental data on organic matter binding and clustering. Additionally, incorporating the effects of external environmental factors, such as moisture and temperature gradients, could enhance the predictive power of such models. 
 Furthermore, delving deeper into the pattern formation of the MOMOS model for soil carbon dynamics can be crucial in the emerging context of the idea of the `Circular Economy' \cite{Yin_2023}, as it can offer a more profound understanding of the behavior and distribution of carbon in soils over time. This knowledge can promote regenerative agriculture, the efficient use of organic waste, and agricultural practice optimization, all of which are consistent with the Circular Economy's guiding ideals of reducing waste, increasing resource efficiency, and promoting environmental sustainability \cite{Wang_2024,Nwaogu_2025}.  Future work should also focus on extending these analyses to incorporate additional biological complexities, such as multi-species interactions, stochastic perturbations and spatial heterogeneities, with a particular attention to the implementation of suitable control approaches \cite{Mehdaoui_2024,Mehdaoui_2025}. 

 The bistable regions observed here were only found near the Turing bifurcation boundary, and only in two dimensions where this bifurcation appeared to be subcritical for all parameters considered. However, more complex cross-diffusion models may exhibit other mechanisms of multistability where such transient amplification can play a role in widening the regimes of pattern formation. Moreover, there is growing evidence that over-reliance on classical linear stability may fail to work for large classes of systems, such as those where transient Turing instabilities generically lead to alternative homogeneous equilibria \cite{krause2024turing}. This suggests the need for better tools to understand increasingly-realistic models, especially in the spatial setting where the number of equilibria (and hence the overall complexity of attractors) can be prohibitively large. Alternative approaches to understanding more global aspects of such systems (for instance, those reviewed by \cite{krakovska2024resilience}) are likely fruitful paths to try and understand the emergence and resilience of complex states, such as patterns. 
 
\section*{Acknowledgements}

A.M., F.D. and  C.M. research activity is funded by the National Recovery and Resilience Plan (NRRP), Mission 4 Component 2 Investment 1.4 - Call for tender No.
3138 of 16 December 2021, rectified by Decree n.3175 of 18 December 2021 of Italian Ministry of University and Research funded by the European Union – NextGenerationEU; Award Number: Project code CN 00000033, Concession Decree No. 1034 of 17 June 2022 adopted by the Italian Ministry of University and Research, CUP B83C22002930006, Project title \lq \lq National Biodiversity Future Centre''. \\
F.D. and A.M. gratefully acknowledge the support of INdAM-GNCS 2025 Project \lq \lq ROOTS: Research on sOil Organic carbon dynamics, numerical and data-driven meThods for Spatial pattern'', grant number CUP E53C24001950001.\\
D.L. research has been developed within the Project \lq \lq P2022PSMT7'' CUP H53D23008940001 funded by EU in NextGenerationEU plan through the Italian "Bando Prin 2022 - D.D. 1409 del 14-09-2022" by MUR.\\
F.D. and A.M. are members of the INdAM research group GNCS; D.L. is member of the INdAM research group GNFM. F.D., C.M. and A.M. would like to thank Mr. Cosimo Grippa for his valuable technical support. 

\bibliographystyle{unsrt} 
\bibliography{biblio}

\end{document}